\documentclass[reqno, 12pt]{amsart}

\usepackage{mathrsfs}
\usepackage{amscd}
\usepackage{amsmath}
\usepackage{latexsym}
\usepackage{amsfonts}
\usepackage{amssymb}
\usepackage{amsthm}
\usepackage{graphicx}
\usepackage{hyperref}
\usepackage{makecell}
\usepackage{array}
\usepackage{booktabs}
\usepackage{multirow}
\usepackage{color,xcolor}

\parindent = 20 pt
\parskip = 4 pt
\textwidth 6.5in \textheight 9.6in \setlength{\topmargin}{0.1in}
\addtolength{\topmargin}{-\headheight}
\addtolength{\topmargin}{-\headsep}

\setlength{\oddsidemargin}{0in} \oddsidemargin  0.0in
\evensidemargin 0.0in

\setlength{\oddsidemargin}{0in} \oddsidemargin  0.0in
\evensidemargin 0.0in \textwidth 6.5in \textheight 9.2in
\setlength{\topmargin}{0.1in} \addtolength{\topmargin}{-\headheight}
\addtolength{\topmargin}{-\headsep}

\widowpenalty=10000

\newtheorem{theorem}{Theorem}[section]

\newtheorem{remark}[theorem]{Remark}
\newtheorem{Thm}{Theorem}[section]
\newtheorem{Def}{Definition}[section]
\newtheorem{Le}{Lemma}[section]

\newtheorem{Rem}{Remark}[section]

\numberwithin{equation}{section}

\title[Inverse potential problem]{Stability for the inverse random potential scattering problem}

  {\author[T. Wang]{Tianjiao Wang}\address{School of Mathematical Sciences, Zhejiang University, Hangzhou 310058, China}}\email{wangtianjiao@zju.edu.cn}

\author[X. Xu]{Xiang Xu}
\address{School of Mathematical Sciences, Zhejiang University, Hangzhou 310058, China}
\email{xxu@zju.edu.cn}
\author[Y. Zhao]{Yue Zhao}
\address{School of Mathematics and Statistics, and Key Lab NAA-MOE, Central China Normal University,
Wuhan 430079, China}
\email{zhaoyueccnu@163.com}

\subjclass[2010]{35R30, 78A46.}
\keywords{inverse scattering problem}

\begin{document}

\begin{abstract}
This paper is concerned with an inverse random potential problem for the Schr\"odinger equation. The random potential is assumed to be
a generalized Gaussian random function, whose covariance operator is a classical pseudo-differential operator. 
For the direct problem, the meromorphic continuation of the resolvent of the Schr\"odinger operator with rough potentials is investigated,
which yields the well-posedness of the direct scattering problem and a Born series expansion. For the inverse problem,
we derive a probabilistic stability
estimate for determining the principle symbol of the covariance operator of the random potential. The stability result provides an estimate 
of the probability for an event when the principle symbol can be quantitatively determined by a single realization of the multi-frequency backscattered far-field pattern.
The analysis employs the ergodicity theory and quantitative analytic continuation principle.

\end{abstract}

\maketitle

\section{Introduction}

Stochastic Schr\"odinger equation is a fundamental mathematical formulation in quantum mechanics under the influence of random processes. 
It can also be used as the paraxial wave equation in random media which is a model for many applications, for instance in communication and imaging
\cite{GS, GS-MMS, GSARMA, GPT}.
In such stochastic models, the potential function is usually assumed to be a random process. In stochastic inverse scattering theory, we aim to determine the statistical properties such as mean and variance of the random potential from appropriate measurements away from its support. 
Inverse scattering problems have wide applications in many scientific and industrial areas such as geophysical exploration, biomedical imaging, and radar and sonar \cite{Colton}.

The deterministic inverse potential problems for the Schr\"odinger equation have been extensively investigated. In \cite{Uhlmann}, Uhlmann and Sylvester solved the famous Calder$\acute{\rm o}$n problem by transforming it into an inverse potential problem for the  Schr\"odinger equation. The method of constructing complex geometric optics (CGO) solutions was developed in this work which is now a fundamental tool in studying uniqueness and stability for inverse potential problems \cite{Ale, Isakov1, U_1}. Stability estimates for the corresponding inverse scattering problems were also obtained by constructing CGO solutions \cite{HH, IN}. However, this method has a limitation, that is, it not applicable to derive stability in two dimensions. To break this limit, by using multi-frequency data, a novel methodology has been  developed in \cite{ZZ} recently, which achieves the stability for the inverse potential scattering problem in two dimensions without using CGO solutions.

Compared with the deterministic cases, inverse random potential problems are much less studied. In \cite{Caro, LPS}, the authors studied the unique determination of an inverse random potential problem for 
the Schr\"odinger equation. In these works, the potential was assumed to be a generalized microlocally isotropic Gaussian random process whose covariance operator is a classical 
pseudo-differential operator. Given the multi-frequency random data, the principal symbol of the covariance operator was demonstrated to be uniquely determined by a single realization of the multi-frequency data averaged over the high frequency band with probability one by ergodicity. However, the stability for this inverse random potential problem remains wide open. In \cite{wang2024stability}, an increasing stability for recovering the random source from the expectation of the multi-frequency correlation data was proved by applying the quantitative analytic continuation principle. However, the analysis for the inverse source problem is not applicable to the 
inverse random potential problem. This is because the random potential can not be uniformly bounded with respect to different realizations. As a result,
the desired analyticity of the data may not be obtained.

In this paper, we aim to establish a methodology for deriving a probabilistic stability estimate for the inverse random potential scattering problem for the Schr\"odinger equation. The potential is assumed to be a generalized Gaussian random field as proposed in \cite{LPS}. Such random field includes classical stochastic processes such as white noise, fractional Brownian motions, Markov process and so on.  However, the method developed by \cite{Caro, LPS} for proving the uniqueness may not be directly applicable to obtain a quantitative 
stability estimate. An essential difficulty could be that the ergodicity theorem used in these works does not yield a uniform estimate for the difference between the expectation of the correlated zeroth-order
term of the Born series and that at a single realization. Thus, one may not obtain a stability which almost surely holds.
To deal with this issue, we consider an event when it is possible to obtain a quantitative stability using the multi-frequency far-field pattern at a single realization. In this step, we use the analytic
continuation principle which requires the analyticity and estimates of the resolvent.
On the other hand, the probability of the corresponding complementary event  is estimated by the Chebyshev inequality.
As a consequence, we establish a probabilistic stability which gives an estimate of the probability of an event when the strength of the random potential 
can be quantitatively determined by the multi-frequency far-field pattern at a single realization. Moreover, the probability approaches one as the frequency increases.
It should be mentioned that with the aid of analyticity of the resolvent, we only use data measured in a finite interval of frequencies, while \cite{Caro, LPS} require data at all high frequencies.

This paper is organized as follows. In Section \ref{sec2} we describe the stochastic Schr\"odinger equation and formulate the main result of this paper.
The resolvent of the Schr\"odinger operator with rough potentials is investigated in Section \ref{re}. Based on the analysis of the resolvent, well-posedness of the direct
scattering problem is established together with a Born series expansion. Section \ref{IP} is devoted to the proof of the main result. 
The zeroth, first, and higher order terms of the Born series expansion are investigated. 
The analysis in this section relies on an ergodicity result and applying the Chebyshev inequality and analytic continuation principle.

\section{Problem formulation and main result}\label{sec2}

We formulate the stochastic scattering problem. Let $u^{inc}:=e^{{\rm i}k\theta \cdot x}$  be an incident plane wave with incident direction $\theta \in \mathbb{S}^2$. Denote the scattered wave and the total wave by $u$ and $u^{sc}$, respectively. Then the scattering problem can be formulated as \begin{align}\label{DSP}
	\begin{cases}
		(\Delta+k^2+V)u=0 ,& \quad \text{in}\quad \mathbb{R}^3,  \\
		u=u^{inc}+u^{sc}, & \quad \text{in}\quad \mathbb{R}^3,\\
		\lim\limits_{r \to \infty} r (\partial_r u- ik u)=0, &\quad r=|x|,
	\end{cases} 
\end{align}
where $k>0$ is the wavenumber\slash frequency, and $V$ is the potential function supported in a bounded and simply connected domain $D$.

Letting $(\Omega, \mathcal{A}, \mathbb{P})$ be a complete probability space,
we assume that the potential $V$ is a real valued generalized Gaussian function supported in $D$. This means that $V$ is a measurable map from the
sample space $\Omega$ to the space of real-valued distributions $\mathcal D'(\mathbb R^3)$ such that $\omega \mapsto \langle g(\omega), \phi \rangle$ is a real-valued Gaussian random variable for all $\phi \in C_0^\infty(\mathbb R^3)$. The expectation of the random function $V$ is a generalized function given by \[
\mathbb{E}V\, :\, \phi \mapsto \mathbb{E}\langle V,\phi \rangle, \quad \phi \in C_0^\infty(\mathbb R^3),
\] and the covariance is defined as a bilinear form by \[
\mathrm{Cov}V\,:\, (\phi_1, \phi_2) \mapsto \mathrm{Cov}(\langle g,\phi_1 \rangle, \langle V,\phi_2 \rangle), \quad \phi_1, \phi_2 \in C_0^\infty(\mathbb R^3),
\] which gives the covariance operator $C_V$ by \begin{equation}\label{2.1}
	\langle C_V \phi_1, \phi_2 \rangle = \mathrm{Cov}(\langle V,\phi_1 \rangle, \langle V,\phi_2 \rangle), \quad \phi_1, \phi_2 \in C_0^\infty(\mathbb R^3).
\end{equation} Denote the Schwarz kernel of $C_V$ by $K_V(x,y)$, i.e., \begin{equation}\label{2.2}
	\langle C_V \phi_1, \phi_2 \rangle=\int_{\mathbb{R}^3}\int_{\mathbb{R}^3} K_V(x,y) \phi_1(x)\phi_2(y) \,\mathrm{d}x\mathrm{d}y. \end{equation} Combining 
\eqref{2.1}--\eqref{2.2} yields \[
K_V(x,y)=\mathbb{E}[(V(x)-\mathbb{E}V(x))(V(y)-\mathbb{E}V(y))].
\]

In this paper, we focus on the following class of generalized Gaussian random functions proposed by \cite{LPS}, which are
called the Gaussian microlocally isotropic random function.
\begin{Def}\label{Def2.1}
	A generalized Gaussian random function $g$ with zero expectation is called \textbf{microlocally isotropic of order} $-m \in \mathbb{R}$ in the domain $D \subset \mathbb{R}^3$, if $\text{supp}\, g \subset \subset D$ for almost surely $\omega \in \Omega$ and its covariance operator $C_g$ is a classical pseudo-differential operator with the principal symbol $h(x)|\xi|^{-m}$ where $0 \le h \in C^\infty_0(\mathbb{R}^3)$ and $\text{supp} \, h \subset \subset D$. The function $h$ is called the micro-correlation strength of $g$.
\end{Def} 

Based on the above definition, assuming the random potential $V$ is a generalized microlocally isotropic Gaussian (GMIG) random field of order $-m$ in $D$ and
letting $c_V(x,\xi)$ be the symbol of $C_V$, we have \[
C_V(\phi)(x)=(2 \pi)^{-3} \int_{\mathbb{R}^3} e^{i x \cdot \xi} c_V(x, \xi) \hat{\phi}(\xi) \,\mathrm{d}\xi, \quad \phi \in C_0^\infty(\mathbb{R}^3),
\] where $\hat{\phi}(\xi)$ stands for the Fourier transform of $\phi$ defined by $
\hat{\phi}(\xi)=\mathcal{F}\phi(\xi):=\int_{\mathbb{R}^3} e^{-i x \cdot \xi} \phi(x) \,\mathrm{d}x.$  
The kernel $K_V$ can be represented as an oscillatory integral of the form \begin{equation*}
	K_V(x,y)=(2\pi)^{-3} \int_{\mathbb{R}^3} e^{i (x-y) \cdot \xi} c_V(x, \xi) \,\mathrm{d}\xi.
\end{equation*}
\begin{remark}
Notice that according to \cite{li2021inverse}, $V \in W^{\frac{m-3}{2}-\delta,p}(D)$ almost surely for all $\delta>0$ and $1<p<\infty$. Thus, in particular, we are interested 
in the cases when $m\leq 3$, which correspond to rough fields. For the inverse problem in this paper, we assume that the random potential $V$ is a real-valued microlocally isotropic Gaussian random function of order $-m$, where $m$ satisfies $m \in (14/5,4)$, which includes such cases.
\end{remark}
Now we formulate the inverse problem considered in this work. 
Assume that $V$ is a microlocally isotropic generalized Gaussian random function of order $-m$ with micro-correlation strength $h$ in a bounded Lipschitz domain
$D \subset \mathbb{R}^3$.  The scattered wave field $u^{sc}$ has the following asymptotic expansion as $|x|\to\infty$,
\[
u^{sc}(x) = \frac{e^{ik|x}}{|x|} u^\infty(\hat{x},k, \theta) + O\Big(\frac{1}{|x|^2}\Big),
\]
where $u^\infty(\hat{x}, k, \theta)$ is referred to as the far-field pattern associated with the scattering problem with $\hat{x}= \frac{x}{|x|}$ being the observation direction. 
Given an interval $I = (K_0, K]$ of multiple frequencies, we are interested in the following inverse potential problem:. 

{\it Inverse Random Potential Problem:} Determine the micro-correlation strength $h$ from the multi-frequency far-field pattern $\{u^\infty(\hat{x},k, \theta): \hat{x}, \theta\in\mathbb S^2, \,\, k\in I\}$.

Now we present the main stability result in this paper.
Assume that $V_1$ and $V_2$ are two random potentials with micro-correlation strengths $h_1$ and $h_2$, respectively. 
Denote the far-field data corresponding to $V_1$ and $V_2$ by
$u^\infty_{(1)}(\hat{x},\theta,k)$ and $u^\infty_{(2)}(\hat{x},\theta,k)$, respectively. 
Introduce the backscattered far-field data discrepancy in a finite interval $I = (K_0, K]$ with $0<K_0<K$ as follows 
\[
\varepsilon^2 =  \sup_{k\in I, \tau\in(0, 1/2)} \frac{1}{k}\int_k^{2k}\int_{\mathbb S^2}|s^{m}U(s,\theta)|^2\,\mathrm{d}\sigma(\theta)\,\mathrm{d}s, 
\] where \[U(s,\theta)=\overline{u_{(1)}^\infty(-\theta, \theta,s+\tau)} {u_{(1)}^\infty(-\theta;\theta,  s)}-\overline{u_{(2)}^\infty(-\theta, \theta,s+\tau)} {u_{(2)}^\infty(-\theta;\theta,  s)}.\]

The goal of this work is to derive a stability estimate for the inverse random potential problem. To this end,
denoting $a_V(x,\xi):=c_V(x,\xi)-h(x)|\xi|^{-m}$, we need a priori assumptions on the potential function. Let $\mathcal V$ be the set
which consists of GMIG random fields of order $-m$ satisfying the following a priori estimates: 
\begin{align*}
|h(x)| \le M_1 , \|c_V(x,\xi)\|_{C^1_x(D)} \le M_1(1+|\xi|)^{-m}, |a_V(x, \xi)1_{|\xi|\ge 1}| \le M_1|\xi|^{-(m+1)}
\end{align*}
for all $x\in D$ and $\xi\in\mathbb R^3$. Here $M_1, M_2>0$ are two positive constants.
Let $\mathcal C_r:=\{h:\|h\|_{H^r} \le M_2\}$ with $r>0$ being a constant. When $m \in (2,3]$, denote the index $\alpha$ by $
		\alpha=
			\frac{m-3}{2}-\delta
	$ with a sufficiently small positive constant $\delta$ and  let $\alpha=0$ when $m  \in (3,4).$

The following stability is the main result of this paper. Hereafter, the notation $a \lesssim b $ stands for $a \le C b$, where $C>0$ is a generic constant whose special value is not required but should be clear from the context.  
\begin{Thm}\label{Thm5.1} Let $V_1,\,V_2 \in \mathcal V$ be two random potentials with $h_1,h_2 \in \mathcal C_r$ and $\epsilon\in (0, 1/4)$. Furthermore, assume almost surely  $\mathbb E\|V_j\|_{W^{\alpha,p}} \le M_3$ with $p \in [ -\frac{3}{\alpha}, +\infty)$ for $\frac{14}{5}<m<3$, and $\mathbb E\|V_j\|_{L^\infty} \le M_3$ for $3 \le m <4$, where $j=1, 2$ and $M_3$ is a positive constant.
	For any $\alpha \in (0,1/4)$ and $M_0>0$, there exist constant $\beta_1,\beta_2>0$ such that
	\begin{align}\label{PIstab}
		\mathbb P \left(\|h_1-h_2\|_{L^2(\mathbb{R}^3)}^2
		\lesssim \left|\ln \left( C\varepsilon^2+\frac{M_0+K_0^{3+4\alpha}}{(K\ln|\ln \varepsilon|)^{\beta_1}} \right) \right|^{-\beta_2}\right)\ge 1-\frac{1}{M_0}-\frac{1}{K^{a(\epsilon)}}-\frac{1}{\sqrt K_0},
	\end{align} 
	where the positive constant $a(\epsilon) \in (0,1)$ depends on $\epsilon $, and the positive constants $\beta_1,\beta_2$ depend on $r$, $m$, $\alpha$ and $\epsilon.$ 
  \end{Thm}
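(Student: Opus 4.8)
The plan is to combine the Born series expansion of Section~\ref{re} with a term‑by‑term probabilistic analysis (an ergodicity result together with Chebyshev's and Markov's inequalities) and a quantitative analytic continuation argument; the intermediate goal is a quantitative smallness of $\widehat{h_1-h_2}$ on the unit ball, after which the $L^2$ bound follows from analytic continuation plus the a priori Sobolev bound $h_j\in\mathcal C_r$. \emph{Step 1 (main term).} Using the meromorphic continuation of the resolvent from Section~\ref{re}, I would write the far‑field pattern as a Born series $u^\infty=u_0^\infty+u_1^\infty+u_{\mathrm r}^\infty$, where $u_0^\infty(-\theta,\theta,k)=c\,\hat V(-2k\theta)$ is linear in $V$ (the Born approximation), $u_1^\infty$ is quadratic, and $u_{\mathrm r}^\infty$ collects the terms of degree $\ge 3$. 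Substituting into $U(s,\theta)$ and expanding the product, the leading contribution is the correlated zeroth‑order term $W_0(s,\theta)=\overline{u_{(1),0}^\infty(-\theta,\theta,s+\tau)}\,u_{(1),0}^\infty(-\theta,\theta,s)-(\text{same with }V_2)$. A direct computation with the oscillatory‑integral representation of $K_{V_j}$, using $c_{V_j}(x,\xi)=h_j(x)|\xi|^{-m}+a_{V_j}(x,\xi)$ and the a priori decay $|a_{V_j}(x,\xi)\one_{|\xi|\ge1}|\le M_1|\xi|^{-(m+1)}$, gives
\begin{equation*}
\mathbb E\big[s^m\,\overline{u_{(j),0}^\infty(-\theta,\theta,s+\tau)}\,u_{(j),0}^\infty(-\theta,\theta,s)\big]=c_0\,\hat h_j(2\tau\theta)+O(s^{-1}),
\end{equation*}
with $c_0\ne 0$. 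Hence, after the frequency average in $I$, the expectation of $W_0$ reproduces $\widehat{h_1-h_2}$ on the unit ball $B_1\subset\R^3$, up to an error that decays as the band moves to higher frequencies.

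\emph{Step 2 (probabilistic control).} Since $V_j$ is rough and cannot be bounded uniformly in $\omega$, the ergodic limit identifying the ensemble average with the almost‑sure limit of the frequency average (as in \cite{Caro, LPS}) carries no uniform rate, so I would instead isolate a good event. The quantity $\frac1k\int_k^{2k}|s^mW_0(s,\theta)|^2\,\mathrm ds$ is a quadratic functional of a Gaussian field, so its variance is computable by Wick's theorem and decays in $k$; Chebyshev's inequality then produces an event $E_0$ with $\mathbb P(E_0^c)\le 1/M_0$ on which this average is within $\lesssim M_0k^{-\gamma}$ of its expectation. Using the resolvent bounds of Section~\ref{re}, I would estimate the expectations of the first‑order correlated terms and of the remainder $u_{\mathrm r}^\infty$ in terms of $\mathbb E\|V_j\|_{W^{\alpha,p}}$ (for $m\in(14/5,3)$) or $\mathbb E\|V_j\|_{L^\infty}$ (for $m\in[3,4)$), showing that they carry negative powers of the frequency; Markov's inequality at a single realization then yields events $E_1,E_2$ with $\mathbb P(E_1^c)\le K^{-a(\epsilon)}$ and $\mathbb P(E_2^c)\le K_0^{-1/2}$, where $a(\epsilon)\in(0,1)$ comes from a Sobolev‑interpolation trade‑off governed by $\epsilon$ and the remainder estimate needs $K_0$ large for convergence of the Born series (this is where the factor $K_0^{3+4\alpha}$ enters). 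On $E_0\cap E_1\cap E_2$, the discrepancy $\varepsilon^2$ controls
\begin{equation*}
\sup_{\tau\in(0,1/2),\,\theta\in\mathbb S^2}\big|\widehat{h_1-h_2}(2\tau\theta)\big|^2\ \lesssim\ C\varepsilon^2+\frac{M_0+K_0^{3+4\alpha}}{K^{\beta_1}}.
\end{equation*}

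\emph{Step 3 (analytic continuation and assembly).} The analyticity of the resolvent in a strip (Section~\ref{re}) lets me extend the usable band of frequencies by a factor $\sim\ln|\ln\varepsilon|$ before the continuation error overtakes $\varepsilon^2$, which sharpens the denominator above to $(K\ln|\ln\varepsilon|)^{\beta_1}$ and gives the effective error $\mathcal E_0:=C\varepsilon^2+\frac{M_0+K_0^{3+4\alpha}}{(K\ln|\ln\varepsilon|)^{\beta_1}}$ for $\widehat{h_1-h_2}$ on $B_1$. Since $h_1-h_2$ is supported in the bounded set $D$, $\widehat{h_1-h_2}$ is entire of exponential type; a quantitative analytic continuation (logarithmic convexity) estimate then bounds $\|\widehat{h_1-h_2}\|_{L^2(B_R)}$ by an a priori constant times $\mathcal E_0^{\mu(R)}$ with $\mu(R)\to0$. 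Splitting
\begin{equation*}
\|h_1-h_2\|_{L^2(\R^3)}^2=\int_{B_R}|\widehat{h_1-h_2}|^2+\int_{\R^3\setminus B_R}|\widehat{h_1-h_2}|^2\ \lesssim\ \mathcal E_0^{2\mu(R)}+M_2^2R^{-2r},
\end{equation*}
the tail being controlled by $h_j\in\mathcal C_r$, and optimizing over $R$ gives $\|h_1-h_2\|_{L^2(\R^3)}^2\lesssim|\ln\mathcal E_0|^{-\beta_2}$ with $\beta_2$ depending on $r,m,\alpha,\epsilon$. All of this holds on $E_0\cap E_1\cap E_2$, whose probability is at least $1-1/M_0-K^{-a(\epsilon)}-K_0^{-1/2}$, which is \eqref{PIstab}.

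\emph{Main obstacle.} I expect the crux to lie in Step~2: producing finite, explicitly frequency‑decaying bounds for the expectations of the \emph{nonlinear} higher‑order Born functionals of the rough Gaussian field $V_j$ (which lies only in a negative Sobolev space), and designing the good event so that the deterministic analytic‑continuation estimates of Step~3 genuinely apply on it. Balancing the three error mechanisms — the Chebyshev deviation ($M_0$), the first‑order correlator ($K^{-a(\epsilon)}$), and the Born remainder ($K_0^{-1/2}$) — so that the probability tends to $1$ and the effective error $\mathcal E_0$ tends to $0$ as $K\to\infty$ is the remaining delicate point.
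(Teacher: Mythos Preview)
Your overall architecture—Born expansion, probabilistic control of each piece, analytic continuation in frequency, then low/high Fourier splitting—matches the paper. But the allocation of the three probability losses is inverted relative to the paper, and this reflects a substantive misidentification of where the real difficulty lies.

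In the paper, the term $K^{-a(\epsilon)}$ does \emph{not} arise from the first-order Born correlator via ``Sobolev interpolation trade-off''; it comes from a quantitative ergodicity lemma for the \emph{zeroth}-order term. Writing $Y_k=\frac{1}{k^{1-\epsilon}}\int_k^{2k}X_s\,\mathrm ds$ with $X_s=s^m(U_s^2-\mathbb E U_s^2)$, one needs the event $\{|Y_k|<1\ \text{for all }k\ge K\}$, uniformly in $\theta,\tau$, because the analytic continuation step evaluates the estimate at an extrapolated frequency $A>K$ depending on $\varepsilon$. A single Chebyshev at a fixed $k$ cannot deliver this; the paper proves a new lemma (Borel--Cantelli along a subsequence $k_n=n^\gamma$ plus a supremum interpolation between consecutive $k_n$) and reads off $\mathbb P(\exists\,k\ge K:\,|Y_k|\ge1)\lesssim K^{-a(\epsilon)}$ with $a(\epsilon)=\min\{c_2-2\epsilon-1/\gamma,\ 1/\gamma-2\epsilon\}$. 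Your plan as written does not contain this ingredient, and it is the genuine crux.

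Conversely, the $1/M_0$ loss in the paper comes from the \emph{first}-order term: one shows $\mathbb E\int_{\mathbb S^2}\int_1^\infty s^{m-1+\delta_0}|u_1^\infty|^2\,\mathrm ds\,\mathrm d\sigma(\theta)<\infty$ (via mollification, Isserlis' theorem, and the singularity structure of $K_V$), then applies Markov with threshold $\widetilde M_0$; the higher-order tail $u_2^\infty$ is handled by a purely deterministic resolvent bound once $K_0>C_1\|V_j\|_{W^{\alpha,p}}$, and it is \emph{this} Born-convergence condition (together with $K_0>C_0\|V_j\|_{W^{\alpha,p}}^2$ for analyticity in the slab) that, via Markov on $\|V_j\|_{W^{\alpha,p}}$, produces the $1/\sqrt{K_0}$ loss. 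So your anticipated ``main obstacle''—the nonlinear higher-order functionals—is in fact the easiest piece.

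Two smaller points. First, after the first-order term is averaged over $\mathbb S^2$, what you obtain is an $L^2(\mathbb S^2)$ bound $\int_{\mathbb S^2}|\widehat{h_1-h_2}(2\tau\theta)|^2\,\mathrm d\sigma(\theta)$, not the pointwise $\sup_{\theta}$ control you wrote; this is enough for $\|\widehat{h_1-h_2}\|_{L^2(B_1)}$. Second, $|s^m W_0|^2$ is quartic, not quadratic, in $V$; the quantity whose second moment is controlled by Wick/Isserlis and the covariance decay $|\mathbb E X_sX_t|\lesssim(1+|s-t|)^{-1}$ is $Y_k$ itself.
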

\begin{Rem}
The estimate \eqref{PIstab} is a probabilistic estimate which shows the probability of the even when the strength of the random potential can
be quantitatively determined by the multi-frequency far-field pattern at a single realization. Moreover, it indicates that as the frequencies of the data and the arbitrarily chosen
constant $M_0$ increase 
the probability of such event increases and approaches one.
\end{Rem}

\section{Direct scattering problem}\label{re}

In this section, we investigates the meromorphic continuation of the resolvent for the Schr\"odinger operator with rough potential.
An analytic region and resolvent estimates with respect to complex wavenumber are obtained, which are useful in the derivation of the stability.  Moreover, as a result, we establish the well-posedness of the direct scattering problem and a Born series expansion of the far-field pattern.  

We introduce some function spaces. The function spaces $W^{s,p}:=W^{s,p}(\mathbb{R}^3)$ denote the standard Sobolev spaces and denote $H^s:=W^{s,2}$. Furthermore, the function spaces $H^s_{loc}:=H^s_{loc}(\mathbb{R}^3)$, $W^{s,p}_{comp}:=H^s_{comp}(\mathbb{R}^3)$ are respectively defined by \begin{align*}
	H^s_{loc}(\mathbb{R}^3) &:=\{u \in \mathcal{D}'(\mathbb{R}^3):\chi u \in H^s(\mathbb{R}^3),\forall \, \chi \in C_0^\infty(\mathbb{R}^3)\}, \\ H^s_{comp}(\mathbb{R}^3) &:=\{u \in H^s(\mathbb{R}^3):\exists \, \chi \in C_0^\infty(\mathbb{R}^3),\,\chi u =u\},
\end{align*} where $\mathcal{D}'$ is the generalized function space. The bounded linear operators between the function spaces $X$ and $Y$ are denoted by $\mathcal{L}(X,Y)$ with the operator norm $\|\cdot\|_{\mathcal{L}(X,Y)}$. 

Denote the free resolvent of the Schr\"odinger by ${R}_0(\lambda):=(-\Delta-\lambda^2)^{-1}$ which has the explicit kernel 
\begin{align} 
	\Phi_\lambda(x,y)=
	\frac{e^{i\lambda|x-y|}}{4\pi |x-y| }.
	\label{4.2}
\end{align}  The following lemma concerns the analyticity and estimates of the free resolvent. The proof adapts the arguments in 
\cite{dyatlov2019mathematical}.
\begin{Le}\label{Lem4.1}
	The free resolvent $R_0(\lambda)$ is analytic for $\lambda \in \mathbb{C}$ with $\Im \lambda>0$ as a family of operators \[
	R_0(\lambda): H^s(\mathbb{R}^3) \to H^s(\mathbb{R}^3)
	\] for all $s \in \mathbb{R}$. Furthermore, given a cut-off function $\chi \in C_0^\infty(\mathbb{R}^3)$, ${R}_0(\lambda)$ can be extended to a family of analytic operators for $\lambda \in \mathbb{C}$ as follows \[
	\chi R_0(\lambda) \chi : H^s(\mathbb{R}^3) \to H^s(\mathbb{R}^3)
	\] with the resolvent estimates 
	\begin{equation*}
		\|\chi R_0(\lambda) \chi \|_{\mathcal{L}(H^s,H^t)} \lesssim (1+|\lambda|)^{t-s-1}e^{L (\Im \lambda)_-},
	\end{equation*} where $t\in [s,s+2]$, $v_-:=\max\{-v,0\}$ and $L$ is a constant larger than the diameter of the support of $\chi$.
\end{Le}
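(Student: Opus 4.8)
The plan is to establish the three assertions separately: holomorphy of $R_0(\lambda)$ on the upper half-plane directly from its Fourier multiplier; the entire continuation of the cut-off resolvent, which in $\mathbb{R}^3$ is forced by the strong Huygens principle; and the quantitative bounds, which follow from a finite-time wave representation together with one integration by parts and complex interpolation. For the first assertion, $\Im\lambda>0$ gives $\lambda^2\notin[0,\infty)$, so the multiplier $m_\lambda(\xi)=(|\xi|^2-\lambda^2)^{-1}$ of $R_0(\lambda)$ is a bounded smooth function of $\xi$ with $|m_\lambda(\xi)|\le C_\lambda\langle\xi\rangle^{-2}$; hence $R_0(\lambda)\in\mathcal L(H^s,H^s)$ (indeed $\mathcal L(H^s,H^{s+2})$) for every $s\in\mathbb{R}$, and since $\lambda\mapsto m_\lambda(\xi)$ is holomorphic with locally uniform multiplier bounds, $\lambda\mapsto R_0(\lambda)$ is holomorphic into $\mathcal L(H^s,H^s)$ on $\{\Im\lambda>0\}$.

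For the continuation I would use the representation $R_0(\lambda)=\int_0^\infty e^{i\lambda t}S(t)\,\mathrm{d}t$, valid for $\Im\lambda>0$, where $S(t)=\sin(t\sqrt{-\Delta})/\sqrt{-\Delta}$ solves $\partial_t^2 S=\Delta S$ with $S(0)=0$, $\partial_t S(0)=I$ (the identity is checked by integrating by parts twice in $t$). In three dimensions the Schwartz kernel of $S(t)$ is carried by $\{|x-y|=t\}$, so $\chi S(t)\chi\equiv0$ once $t>\mathrm{diam}(\mathrm{supp}\,\chi)$; consequently, for any $L>\mathrm{diam}(\mathrm{supp}\,\chi)$,
\[
\chi R_0(\lambda)\chi=\int_0^{L}e^{i\lambda t}\,\chi S(t)\chi\,\mathrm{d}t,
\]
which is a finite integral that converges for every $\lambda\in\mathbb{C}$ and is manifestly an entire $\mathcal L(H^s,H^t)$-valued function. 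It agrees for $\Im\lambda>0$ with the multiplier definition, and its kernel is $\chi(x)\Phi_\lambda(x,y)\chi(y)$ with $\Phi_\lambda$ as in \eqref{4.2}.

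For the estimates, $|e^{i\lambda t}|=e^{t(\Im\lambda)_-}\le e^{L(\Im\lambda)_-}$ on $[0,L]$ together with the multiplier bounds $\|S(t)\|_{\mathcal L(H^s,H^s)}\le t\lesssim1$ and $\|S(t)\|_{\mathcal L(H^s,H^{s+1})}\lesssim1$ for $0\le t\le L$ (both read off from $|\sin(t|\xi|)/|\xi||\le\min(t,1/|\xi|)$) immediately give $\|\chi R_0(\lambda)\chi\|_{\mathcal L(H^s,H^{s+1})}\lesssim e^{L(\Im\lambda)_-}$. Integrating by parts once in $t$ — the boundary terms vanish because $S(0)=0$ and $\chi S(L)\chi=0$, while $\partial_t S=\cos(t\sqrt{-\Delta})$ is bounded on $H^s$ uniformly in $t$ — yields the extra factor and $\|\chi R_0(\lambda)\chi\|_{\mathcal L(H^s,H^s)}\lesssim(1+|\lambda|)^{-1}e^{L(\Im\lambda)_-}$ for $|\lambda|\ge1$ (for $|\lambda|\le1$ this is trivial). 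The top endpoint is obtained by an elliptic bootstrap: from $(-\Delta-\lambda^2)\Phi_\lambda(\cdot,y)=\delta(\cdot-y)$ one gets, for $g=\chi R_0(\lambda)\chi f$,
\[
-\Delta g=\chi^2 f+\lambda^2 g+[-\Delta,\chi]\,\widetilde\chi R_0(\lambda)\chi f,
\]
with $\widetilde\chi\equiv1$ on a neighbourhood of $\mathrm{supp}\,\chi$ and support small enough that the relevant length scale stays below $L$; the commutator is first order and is controlled by the $\mathcal L(H^s,H^{s+1})$ bound of $\widetilde\chi R_0(\lambda)\chi$, so $\|\Delta g\|_{H^s}\lesssim(1+|\lambda|)e^{L(\Im\lambda)_-}\|f\|_{H^s}$, and then $\|g\|_{H^{s+2}}\le\|g\|_{H^s}+\|\Delta g\|_{H^s}\lesssim(1+|\lambda|)e^{L(\Im\lambda)_-}\|f\|_{H^s}$. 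Complex interpolation between the endpoints $t=s$ and $t=s+2$ (with $[H^s,H^{s+2}]_\theta=H^{s+2\theta}$) produces the exponent $t-s-1$ for all $t\in[s,s+2]$.

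The technical heart is this last part, and the one structural observation driving it is the truncation of the $t$-integral to $[0,L]$ coming from sharp Huygens: this is simultaneously what makes the continuation entire and what produces the factor $e^{L(\Im\lambda)_-}$, after which everything reduces to elementary multiplier bounds for the wave propagator and a single integration by parts. The delicate points are extracting exactly the gain $(1+|\lambda|)^{-1}$ — no more and no less — at regularity level $t=s$, and coordinating the cut-offs $\chi$ and $\widetilde\chi$ so that all support conditions, hence the fixed length scale $L$, remain consistent throughout the bootstrap; the low-frequency regime $|\lambda|\lesssim1$ and the verification that the multiplier and wave-equation definitions of $R_0(\lambda)$ coincide are routine.
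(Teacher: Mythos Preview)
Your proposal is correct and follows essentially the same approach as the paper: the wave propagator representation $R_0(\lambda)=\int_0^\infty e^{i\lambda t}U(t)\,\mathrm{d}t$, truncation to $[0,L]$ via the strong Huygens principle, a single integration by parts using $\partial_t U(t)=\cos(t\sqrt{-\Delta})$ to extract the factor $(1+|\lambda|)^{-1}$ at $t=s$, and then elliptic regularity plus interpolation for the remaining $t\in[s,s+2]$. Your write-up is more explicit than the paper's (notably the multiplier argument for $\Im\lambda>0$ and the commutator bookkeeping in the elliptic bootstrap), but the architecture is identical.
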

\begin{proof}
	With the help of the functional calculus \cite[Appendix B.2]{dyatlov2019mathematical}, we can denote \[
	U(t):=\frac{\sin t\sqrt{-\Delta}}{\sqrt{-\Delta}}, 
	\] which gives a representation of $R_0$ as follows
	\[
	R_0(\lambda)=\int_0^\infty e^{i\lambda t} U(t)\,\mathrm{d}t.
	\] The strong Huygens's principle implies that \[
	(U(t) \chi)(x)=0,\quad t>\sup\{|x-y|:y \in \text{supp} \chi\}.
	\] Thus we obtain \[
	\chi  R_0(\lambda) \chi =\int_0^L e^{i\lambda t} \chi U(t) \chi \,\mathrm{d}t,
	\] where $L> \text{diam}\,\text{supp} \chi$.
	The spectral representation \cite[Appendix B.1]{dyatlov2019mathematical} directly
	gives \[
	\partial_t^k U(t) \,:\, H^s(\mathbb{R}^3) \to H^{s-k+1}(\mathbb{R}^3),\quad k \in \mathbb{N},\quad s \in \mathbb{R}.
	\]  Furthermore, we have \[
	\chi  R_0(\lambda) \chi=(i\lambda)^{-1}\int_0^L  \partial_t(e^{i\lambda t}) \chi U(t) \chi \,\mathrm{d}t=-(i\lambda)^{-1}\int_0^L  e^{i\lambda t} \chi  \partial_t(U(t)) \chi \,\mathrm{d}t.
	\] Since $\partial_tU(t)=\cos{\sqrt{-\Delta}}=O_{\mathcal{L}(H^s, H^s)}(1)$ by the spectral representation, we prove the lemma for the case $s=t$. 
	The remaining cases $s<t \le t+2$ follow by applying the standard elliptic regularity theory and the interpolation inequality.
\end{proof}

 Next, we shall prove resolvent estimates for the resolvent $R_V(\lambda):=(-\Delta-\lambda^2-V)^{-1}$ of the Schr\"odinger operator with rough potentials. The following lemma in \cite{Caro} is useful in the analysis.  \begin{Le}\label{Lem5.1}
	Assume that $V \in W^{s,p}_{comp}(\mathbb{R}^3)$ with $s \in [-\frac{1}{2},0)$ and $p \in [-\frac{3}{s},  +\infty)$. We have \[
	\|V f\|_{H^{s}(\mathbb{R}^3)} \lesssim \| V\|_{W^{s,p}(\mathbb{R}^3)} \| \chi f\|_{H^{-s}(\mathbb{R}^3)},\quad \forall\, f \in H^{-s}_{loc}(\mathbb{R}^3), 
	\] where $\chi \in C_0^\infty(\mathbb{R}^3)$ is a cut-off function with $\chi|_{D}=1$. 
\end{Le}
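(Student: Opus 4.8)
The statement to prove is Lemma~\ref{Lem5.1}, the Sobolev multiplier estimate
\[
\|V f\|_{H^{s}(\mathbb{R}^3)} \lesssim \| V\|_{W^{s,p}(\mathbb{R}^3)} \| \chi f\|_{H^{-s}(\mathbb{R}^3)}, \quad s\in[-\tfrac12,0),\ p\in[-\tfrac3s,+\infty),
\]
for compactly supported $V$. My plan is to reduce the multiplication estimate to a duality pairing and then control that pairing by Sobolev embeddings and H\"older's inequality, treating $V$ as a distribution of negative order. Since $s<0$, the target norm $\|Vf\|_{H^s}$ can be realized by duality against $H^{-s}$ functions: I would write
\[
\|Vf\|_{H^s} = \sup_{\|g\|_{H^{-s}}=1} \big|\langle Vf, g\rangle\big|,
\]
and observe that $\langle Vf, g\rangle = \langle V, fg\rangle$, so the whole problem becomes estimating $|\langle V, fg\rangle|$ by $\|V\|_{W^{s,p}}\,\|\chi f\|_{H^{-s}}\,\|g\|_{H^{-s}}$. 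The key structural point is that the product $fg$ should land in the predual space $W^{-s,p'}$ of $W^{s,p}$, since then $|\langle V, fg\rangle|\le \|V\|_{W^{s,p}}\|fg\|_{W^{-s,p'}}$ follows immediately from the definition of the negative-order Sobolev norm by duality.

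Thus the heart of the argument is the bilinear product estimate
\[
\|fg\|_{W^{-s,p'}} \lesssim \|\chi f\|_{H^{-s}}\,\|g\|_{H^{-s}},
\]
where I may insert the cutoff $\chi$ because $V$ is compactly supported in $D$, so only the values of $f$ on $\mathrm{supp}\,V\subset D$ matter and I can replace $f$ by $\chi f$ throughout. Here $-s\in(0,\tfrac12]$ and $p'=p/(p-1)\in(1,\tfrac{3}{3+s}]$ is the H\"older conjugate. To establish this, I would use the fractional Leibniz (Kato--Ponce type) or a paraproduct decomposition to estimate a product of two functions in $H^{-s}$. The constraint $s\ge -\tfrac12$, i.e. $-s\le\tfrac12$, ensures $H^{-s}(\mathbb{R}^3)$ embeds into a sufficiently high $L^q$ space via the Sobolev embedding $H^{-s}\hookrightarrow L^q$ with $\tfrac1q=\tfrac12-\tfrac{(-s)}{3}=\tfrac12+\tfrac{s}{3}$; the lower bound $p\ge -\tfrac3s$ on the exponent is exactly what makes the exponents match so that a product of two such $L^q$ functions (together with fractional smoothness) lands in $W^{-s,p'}$. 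I would carry out the bookkeeping by splitting $fg$ into pieces where the derivatives of order $-s$ fall on one factor or the other and then applying H\"older with the Sobolev-embedding exponents.

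The main obstacle I anticipate is the fractional-derivative bookkeeping in the product estimate: verifying that the Leibniz-type distribution of the $-s$ fractional smoothness across the two factors, combined with the Sobolev embeddings $H^{-s}\hookrightarrow L^q$, produces exactly the target regularity and integrability $W^{-s,p'}$ with the prescribed admissible range of $p$. Because this is a low-regularity multiplier estimate with a negative smoothness index, one must be careful that all exponents stay in the admissible Sobolev/H\"older ranges in $\mathbb{R}^3$, and the endpoint $s=-\tfrac12$ together with the lower endpoint $p=-\tfrac3s$ is precisely where the embeddings become sharp. Once the product estimate is in hand, the duality reduction at the start closes the proof cleanly; alternatively, since this lemma is attributed to \cite{Caro}, I would ultimately cite that reference for the detailed paraproduct computation while supplying the duality reduction as the conceptual backbone.
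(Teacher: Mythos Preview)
The paper does not actually prove this lemma; it simply states it and attributes it to \cite{Caro}. Your duality-plus-product-estimate sketch is the standard route for such Sobolev multiplier bounds and is consistent with what one finds in that reference, so your proposal is correct and in line with the paper's (omitted) argument.
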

Applying Lemma \ref{Lem5.1} and the perturbation arguments in \cite[Theorem 3.10]{dyatlov2019mathematical}, we derive the following resolvent estimates for rough potentials.
\begin{Le}\label{Lem5.2}
	 Assume $V \in W^{s,p}_{comp}(\mathbb{R}^3)$ with $s \in [-\frac{1}{2},0)$ and $p \in [-\frac{3}{s},  +\infty)$. Then the resolvent $R_{ V}(\lambda):=(-\Delta-\lambda^2-V)^{-1}$ is meromorphic for $\lambda \in \mathbb{C}$ with $\Im \lambda>0$ as a family of operators $R_{ V}(\lambda): H^s \to H^t$ with $t \in [s,s+2]$, which can be extend to a family of meromorphic operators for \[
      R_{ V}(\lambda):=H^s_{comp} \to H^t_{loc},\quad \lambda \in \mathbb C
      \] with $t \in [s,s+2]$. Further, fix a cut-off function $\chi \in C_0^\infty(\mathbb{R}^3)$. Then there exist constants $A,C_0,\delta,C$ and $L$ such that the estimates \begin{equation*}
		\|\chi R_V(\lambda) \chi \|_{H^s \to H^t} \le C (1+|\lambda|)^{t-s-1}e^{L (\Im \lambda)_-}
	\end{equation*} hold for $t \in [s,s+2]$ and \[
	\lambda \in \mathscr{S}:=\{\lambda \in \mathbb{C}: \Im \lambda \ge -A-\delta \log(1+|\lambda|), |\lambda|>C_0\| V\|^2_{W^{s,p}(\mathbb{R}^3)}\}.
	\]  Moreover, the resolvent $\chi R_{ V}(\lambda) \chi: H^s\to H^s$ is analytic for $\lambda \in \mathscr{S}$.  Here the constants $A,\delta,L$ depend on $\chi$ and $supp\, V$, and the constants $C, C_0$ depend on $\chi$.
\end{Le}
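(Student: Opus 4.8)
The plan is to run the Birman--Schwinger reduction together with the analytic Fredholm theorem, following the scheme of \cite[Section~3]{dyatlov2019mathematical} for compactly supported perturbations, and to replace the multiplication bound used there for bounded potentials by the rough multiplication estimate of Lemma~\ref{Lem5.1}. Fix a cut-off $\chi\in C_0^\infty(\R^3)$ with $\chi\equiv 1$ on a neighbourhood of $\overline D$, so that $V=V\chi=\chi V$. The factorization $-\Delta-\lambda^2-V=(I-VR_0(\lambda))(-\Delta-\lambda^2)$ suggests $R_V(\lambda)=R_0(\lambda)(I-VR_0(\lambda))^{-1}$, and, interpreted with cut-offs as in \cite{dyatlov2019mathematical}, everything is governed by the operator family
\[
T(\lambda):=V\,\chi R_0(\lambda)\chi:\ H^s(\R^3)\longrightarrow H^s(\R^3),\qquad \lambda\in\C .
\]
By Lemma~\ref{Lem4.1}, $\chi R_0(\lambda)\chi$ is an analytic family $H^s\to H^{-s}$ on all of $\C$ (the choice $t=-s$ is admissible since $-s\in[s,s+2]$ when $s\in[-\tfrac12,0)$), with $\|\chi R_0(\lambda)\chi\|_{H^s\to H^{-s}}\lesssim (1+|\lambda|)^{-2s-1}e^{L(\Im\lambda)_-}$; since multiplication by $V$ is bounded $H^{-s}\to H^s$ by Lemma~\ref{Lem5.1}, $T(\lambda)$ is an analytic family on $\C$ with
\[
\|T(\lambda)\|_{H^s\to H^s}\ \lesssim\ \|V\|_{W^{s,p}(\R^3)}\,(1+|\lambda|)^{-2s-1}\,e^{L(\Im\lambda)_-}.
\]
Moreover $\chi R_0(\lambda)\chi$ gains a little regularity and has range in functions supported in $\operatorname{supp}\chi$, so it is compact $H^s\to H^{-s}$ by Rellich; composing with the bounded map $V\colon H^{-s}\to H^s$ shows each $T(\lambda)$ is compact.

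Next I would establish the meromorphic continuation. Since $-2s-1<0$ (for $s\in(-\tfrac12,0)$, which covers the rough potentials of interest), the displayed bound with $(\Im\lambda)_-=0$ gives $\|T(\lambda)\|_{H^s\to H^s}<1$ for $\lambda=iR$ with $R$ large, so $(I-T(\lambda))^{-1}$ exists there by a Neumann series. The analytic Fredholm theorem applied to the analytic family of compact operators $I-T(\lambda)$ on $\C$, invertible at this point, then yields that $(I-T(\lambda))^{-1}$ is meromorphic on $\C$. Plugging this into the cut-off version of $R_0(\lambda)(I-VR_0(\lambda))^{-1}$ (glued with $R_0(\lambda)$ away from $\operatorname{supp}\chi$, as in \cite{dyatlov2019mathematical}) produces the claimed families: a meromorphic $R_V(\lambda)\colon H^s\to H^t$ for $\Im\lambda>0$, and its meromorphic extension $R_V(\lambda)\colon H^s_{comp}\to H^t_{loc}$ to $\lambda\in\C$, for every $t\in[s,s+2]$, satisfying $(-\Delta-\lambda^2-V)R_V(\lambda)=I$; the gain of up to two derivatives is exactly the smoothing of $R_0$.

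It remains to produce the pole-free region $\mathscr S$ with the quantitative bound. On $\{\Im\lambda\ge -A-\delta\log(1+|\lambda|)\}$ one has $e^{L(\Im\lambda)_-}\le e^{LA}(1+|\lambda|)^{L\delta}$, so the estimate for $T(\lambda)$ becomes
\[
\|T(\lambda)\|_{H^s\to H^s}\ \le\ C_1\,e^{LA}\,\|V\|_{W^{s,p}(\R^3)}\,(1+|\lambda|)^{\kappa},\qquad \kappa:=-2s-1+L\delta .
\]
Fixing $\delta>0$ small enough that $\kappa<0$, and then $A$ and $C_0$ large enough (depending only on $C_1,L,\kappa$), the right-hand side is $\le\tfrac12$ whenever $|\lambda|>C_0\|V\|_{W^{s,p}(\R^3)}^2$. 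Hence on
\[
\mathscr S=\{\lambda\in\C:\ \Im\lambda\ge -A-\delta\log(1+|\lambda|),\ |\lambda|>C_0\|V\|_{W^{s,p}(\R^3)}^2\}
\]
the operator $I-T(\lambda)$ is invertible by a Neumann series with $\|(I-T(\lambda))^{-1}\|_{H^s\to H^s}\le 2$; in particular $\mathscr S$ contains no pole of $R_V$. Using the identity $\chi R_V(\lambda)\chi=\chi R_0(\lambda)\chi\,(I-T(\lambda))^{-1}$ (valid after the cut-off bookkeeping of \cite{dyatlov2019mathematical}) together with Lemma~\ref{Lem4.1}, for $t\in[s,s+2]$ and $\lambda\in\mathscr S$,
\[
\|\chi R_V(\lambda)\chi\|_{H^s\to H^t}\ \le\ \|\chi R_0(\lambda)\chi\|_{H^s\to H^t}\,\|(I-T(\lambda))^{-1}\|_{H^s\to H^s}\ \lesssim\ (1+|\lambda|)^{t-s-1}e^{L(\Im\lambda)_-},
\]
which is the asserted estimate, and the same factorization shows $\chi R_V(\lambda)\chi\colon H^s\to H^s$ is analytic on $\mathscr S$; the stated dependence of $A,\delta,L$ on $\chi$ and $\operatorname{supp}V$, and of $C,C_0$ on $\chi$, is read off from the construction.

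The main obstacle is this last step. The region $\mathscr S$ is only logarithmically thick below the real axis, so $e^{L(\Im\lambda)_-}$ is forced to grow like the power $(1+|\lambda|)^{L\delta}$, whereas the decay available from the free resolvent acting $H^s\to H^{-s}$ is merely $(1+|\lambda|)^{-2s-1}$ --- strictly weaker than the $(1+|\lambda|)^{-1}$ one would have for a bounded potential, because Lemma~\ref{Lem5.1} costs $2|s|$ derivatives. One therefore has to balance the slope $\delta$ of the logarithmic strip against $|s|$, and take the threshold radius of order $\|V\|_{W^{s,p}}^2$ large, so that the Birman--Schwinger norm stays below $1$ throughout $\mathscr S$; once that is done, the rest is the standard analytic Fredholm machinery of \cite{dyatlov2019mathematical}.
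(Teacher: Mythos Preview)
Your proposal is correct and follows essentially the same route as the paper: both arguments run the perturbative identity $R_V=R_0(I\mp VR_0)^{-1}$, combine Lemma~\ref{Lem4.1} with the rough multiplication bound Lemma~\ref{Lem5.1} to control $VR_0$ on $H^s$, invoke the analytic Fredholm theorem for the meromorphic continuation, and then choose $\delta$ small and $C_0$ large so that the Neumann series converges on $\mathscr S$. The only cosmetic difference is that the paper spells out the cut-off resolvent identity explicitly (introducing an auxiliary $\chi_1$ with $\chi_1=1$ on $\operatorname{supp}V$, cf.\ \eqref{RI}), whereas you defer that bookkeeping to \cite{dyatlov2019mathematical}; your simplified formula $\chi R_V\chi=\chi R_0\chi(I-T)^{-1}$ is not literally correct without that extra factor, but since you flag this, the argument goes through.
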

\begin{proof}
 Formally, we have the resolvent identity 
 \[
 R_{ V}(\lambda) = R_0(\lambda) (I  + VR_0(\lambda))^{-1}.
 \]
 Using Lemma \ref{Lem5.1}, it can be verified when $\Im \lambda>0$ \begin{align*}
 \| V  R_0(\lambda) \|_{\mathcal{L}(H^{s},H^{s})} &\lesssim \|V\|_{W^{s,p}}\| R_0(\lambda) \|_{\mathcal{L}(H^{s},H^{-s})}  \lesssim (1+|\lambda|)^{-1-2s}.
 \end{align*} Hence, when $\Im \lambda>0$ and $|\lambda| >>1$, the operator $I   + VR_0(\lambda) : H^s \to H^s$ is invertible by the following Neumann series argument \[
 (I +  VR_0(\lambda))^{-1}=\sum_{j=0}^\infty (-VR_0(\lambda))^j.
 \] Moreover, from Lemma \ref{Lem4.1}--\ref{Lem5.1} we have that $ VR_0(\lambda) : H^s \to H^s$ is a compact operator. Therefore, by applying the analytic Fredholm theory \cite[Theorem C.8]{dyatlov2019mathematical} we have that the resolvent \[
 R_{ V}(\lambda) = R_0(\lambda) (I +   VR_0(\lambda))^{-1} : H^s \to H^t
 \] is meromorphic with respect to $\lambda$ with $\Im \lambda>0$ for $t \in [s,s+2]$.
 
 Next we consider the operator $\chi R_V(\lambda)\chi$. There holds the resolvent identity \begin{align}\label{RI}
     \chi R_{ V}(\lambda)\chi = \chi R_0(\lambda) \chi_1 (I +   VR_0(\lambda) \chi_1)^{-1}(1 + VR_0(\lambda)(1-\chi_1))  \chi,
 \end{align} where $ \chi_1\in C_0^\infty(\mathbb R^3)$ such that $\chi_1 = 1$ on $\text{supp}\,V$. By applying Lemma \ref{Lem4.1}--\ref{Lem5.1} and the analytic Fredholm theory again, we obtain that $R_{V}: H^s_{comp} \to H^t_{loc}$ can be extended to a family of meromorphic operators for $\lambda \in \mathbb C$ with $t \in [s,s+2]$. And we have \begin{align*}
    \| V R_0(\lambda) \chi_1  \|_{\mathcal{L}(H^{s},H^{s})} \lesssim \| V\|_{W^{s,p}(\mathbb R^3)} (1+|\lambda|)^{-1-2s}e^{L (\Im \lambda)_-} ,\quad \lambda\in\mathscr{S}.
\end{align*}
We see that for $\delta$ sufficiently small, say $\delta<\frac{1}{2L(1+2s)}$ and $C_0$ sufficiently large, 
\[
\| V R_0(\lambda)\chi_1\|_{\mathcal{L}(H^{s},H^{s})} \lesssim \| V\|_{W^{s,p}(\mathbb R^3)}|\lambda|^{-1/2}<1.
\]
Thus, from a Neumann series argument we have $(I +  VR_0(\lambda)\chi)^{-1}: H^s_{comp}\to H^s_{comp}$
is well-defined. From the resolvent identity \eqref{RI}, the free resolvent estimates in Lemma \ref{Lem4.1} and the analytic Fredholm theory, we have that $\chi R_{ V}(\lambda) \chi:  H^s\to H^t$ is analytic for $\lambda\in\mathscr S$
with the resolvent estimate
\begin{align*}
     &\|\chi R_{ V}(\lambda) \chi \|_{\mathcal{L}(H^s,H^t)}  \\ &\le \|\chi R_0(\lambda) \chi_1 \|_{\mathcal{L}(H^s,H^t)}\frac{1}{1-\|VR_0(\lambda)\chi_1\|_{\mathcal L(H^s,H^s)}}\|(1 + VR_0(\lambda)(1-\chi_1))\chi\|_{\mathcal L(H^s,H^s)}  \\ &\lesssim (1+|\lambda|)^{t-s-1}e^{L (\Im \lambda)_-} \frac{1}{1-C\|V\|_{W^{s,p}(\mathbb R^3)}|\lambda|^{-1/2}} \\ &\lesssim(1+|\lambda|)^{t-s-1}e^{L (\Im \lambda)_-},\quad t \in [s,s+2],
 \end{align*} where $|\lambda|>C_0\|V\|^2_{W^{s,p}(\mathbb R^3)}$ for sufficiently large $C_0.$
 The proof is complete.
 \end{proof}
 
Based on the above arguments, we establish the well-posedness of the direct scattering problem \eqref{DSP} for large wavenumbers.
 
\begin{Thm}\label{wp}
    Assume that the potential $V$ is a GMIG random field of order $-m$ in $D $ with $m>2$. For $k$ being sufficiently large, the scattering problem \eqref{DSP} almost surely admits a unique solution $u^{sc} \in H^{\alpha+2}_{loc}$ with non-positive constant $\alpha$ satisfying $-1/2<\alpha<(m-3)/2$.
\end{Thm}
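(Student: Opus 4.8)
The plan is to recast the scattering problem \eqref{DSP} as an integral (Lippmann--Schwinger type) equation and invoke the resolvent analysis of Lemma \ref{Lem5.2}. First I would set $u^{sc} = u - u^{inc}$, so that $u^{sc}$ satisfies $(\Delta + k^2 + V)u^{sc} = -V u^{inc}$ together with the Sommerfeld radiation condition, while $u^{inc} = e^{ik\theta\cdot x}$ solves the free Helmholtz equation. Since $\theta\in\mathbb S^2$ and $k$ is real, $u^{inc}\in H^\beta_{loc}$ for every $\beta$, so after multiplying by a cut-off $\chi\in C_0^\infty(\mathbb R^3)$ with $\chi\equiv 1$ on a neighborhood of $D\supset\text{supp}\,V$ we have $Vu^{inc}\in H^{\alpha}(\mathbb R^3)$ by Lemma \ref{Lem5.1} (with $s=\alpha\in(-1/2,(m-3)/2)$; here one uses that $V\in W^{\alpha,p}_{comp}$ almost surely for the admissible range of $m$, as recalled in the remark after Definition \ref{Def2.1}, and that $\alpha<0$ so $-\alpha>0$ and $\chi u^{inc}\in H^{-\alpha}_{loc}$). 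Thus the source term lies in $H^{\alpha}_{comp}$.

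Next I would apply $R_V(\lambda)$ with $\lambda=k$. By Lemma \ref{Lem5.2}, for $V\in W^{\alpha,p}_{comp}$ with $\alpha\in[-1/2,0)$ and $p\in[-3/\alpha,+\infty)$, the resolvent $R_V(\lambda):H^{\alpha}_{comp}\to H^{t}_{loc}$ with $t\in[\alpha,\alpha+2]$ is analytic on the region $\mathscr S$, which for real $\lambda=k>0$ contains all $k$ with $k>C_0\|V\|^2_{W^{\alpha,p}}$. Since $\|V\|_{W^{\alpha,p}}$ is almost surely finite, for $k$ sufficiently large (the threshold being realization-dependent, hence "almost surely" for each fixed large $k$, or uniformly large after intersecting over a set of full measure) the operator $R_V(k)$ is a bounded analytic operator there, and in particular $k^2$ is not a pole. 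Define $u^{sc} := -R_V(k)(Vu^{inc})\in H^{\alpha+2}_{loc}$. One checks it solves $(\Delta+k^2+V)u^{sc} = -Vu^{inc}$ in the distributional sense and satisfies the outgoing radiation condition because $R_V(k)$ is precisely the outgoing resolvent (it is the meromorphic continuation from $\Im\lambda>0$, which selects the Sommerfeld-radiating branch, inherited from the kernel $\Phi_\lambda$ in \eqref{4.2}). Then $u = u^{inc}+u^{sc}$ solves \eqref{DSP} and $u^{sc}\in H^{\alpha+2}_{loc}$ as claimed.

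For uniqueness, suppose $u^{sc}_1,u^{sc}_2$ are two radiating solutions in $H^{\alpha+2}_{loc}$; then $w=u^{sc}_1-u^{sc}_2\in H^{\alpha+2}_{loc}$ solves $(\Delta+k^2+V)w=0$ with the radiation condition. Localizing and using Lemma \ref{Lem5.1} again, $Vw\in H^{\alpha}_{comp}$, so $w = R_0(k)(Vw)$ by the radiation condition, i.e. $(I+VR_0(k)\chi_1)(\chi_1 w)$-type identity as in \eqref{RI}; equivalently $w=-R_0(k)(Vw)$ gives $(I + VR_0(k))(\text{density of }w)=0$. Since for $k$ large enough $I+VR_0(k)$ is invertible on $H^\alpha$ by the Neumann series bound $\|VR_0(k)\|_{\mathcal L(H^\alpha,H^\alpha)}\lesssim \|V\|_{W^{\alpha,p}}(1+k)^{-1-2\alpha}<1$ (here $-1-2\alpha>0$ because $\alpha>-1/2$), we get $Vw=0$ and then $w$ solves the free equation with the radiation condition, hence $w\equiv0$ by Rellich's lemma and unique continuation.

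The main obstacle is the realization-dependence of the "sufficiently large $k$" threshold: Lemma \ref{Lem5.2} gives invertibility only for $|\lambda|>C_0\|V\|^2_{W^{\alpha,p}}$, and $\|V\|_{W^{\alpha,p}}$ is an almost-surely finite but unbounded random variable, so no deterministic $k_0$ works for all $\omega$ simultaneously. The resolution is to phrase the conclusion as: for each fixed $k$, on the event $\{\omega : k > C_0\|V(\omega)\|^2_{W^{\alpha,p}}\}$ the solution exists and is unique, and the union of these events over all large $k$ (or, equivalently, the event that $\|V\|_{W^{\alpha,p}}<\infty$) has probability one — which is exactly the content of "$k$ being sufficiently large, \ldots almost surely." A secondary technical point is verifying that $p$ can indeed be chosen in the overlap of $[-3/\alpha,+\infty)$ with the Sobolev embedding range forcing $V\in W^{\alpha,p}$ a.s.; this is where the restriction $m\in(14/5,4)$ (equivalently the lower bound on $m$ that makes $\alpha=(m-3)/2-\delta$ land in $(-1/2,0)$ with an admissible $p$) enters, and one should cite \cite{li2021inverse,LPS} for the a.s. regularity.
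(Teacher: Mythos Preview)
Your proposal is correct and follows essentially the same route as the paper: existence via $u^{sc}:=R_V(k)(Vu^{inc})$ using the resolvent identity \eqref{RI} and Lemma~\ref{Lem5.2}, and uniqueness by reducing to the homogeneous Lippmann--Schwinger equation and invoking the Neumann-series invertibility of $I+VR_0(k)\chi$ for large $k$, followed by unique continuation.

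The one technical step you gloss over, and which the paper treats with some care, is the passage from ``$w\in H^{\alpha+2}_{loc}$ is a radiating distributional solution of $(\Delta+k^2+V)w=0$'' to the integral equation $w=-R_0(k)(Vw)$. In the rough-potential setting this is not immediate: the paper approximates $w$ by smooth $\phi_n$ in $H^{\alpha+2}_{loc}$, applies Green's identity on balls $B_R$, uses the Sobolev embedding $H^{\alpha+2}(B_R)\subset C(B_R)$ (valid precisely because $\alpha>-1/2$, so $\alpha+2>3/2$) together with $\Phi_k(x,\cdot)\in H^{-\alpha}(B_R)$ to pass to the limit, and finally sends $R\to\infty$ using the radiation condition. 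Your line ``$w=R_0(k)(Vw)$ by the radiation condition'' hides exactly this approximation argument; once supplied, your uniqueness step (concluding $Vw=0$ and then $w\equiv0$ via Rellich) is equivalent to the paper's.
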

\begin{proof} 
From \cite{li2021inverse} we have $V \in W^{\alpha,p}(D)$ with non-positive $\alpha$ satisfying $-1/2<\alpha<(m-3)/2$ and $p \in [-\frac{3}{\alpha},+\infty)$. 
Taking $\chi$ and $\chi_1$ 
such that $\chi =1$ in $D $ and $\chi_1 =1$ in ${\rm supp}\, \chi$, we can rewrite \eqref{RI} as \begin{equation}\label{RI2}
 R_{ V}(k)\chi =  R_0(k) \chi_1 (I + VR_0(k)\chi_1)^{-1}\chi.
\end{equation}  Then combining \eqref{RI2} and Lemma \ref{Lem4.1}-- \ref{Lem5.2} we obtain for sufficiently large $k>0$ that \[
 u^{sc}:= R_{V}(k) Vu^{inc}=R_{ V}(k)\chi Vu^{inc}
\] is well defined and $u^{sc} \in H^{\alpha+2}_{loc}$, which is a distributional solution to the scattering problem \eqref{DSP}. 

In what follows we prove the uniqueness. 
Assume that $\tilde u^{sc} \in H^{\alpha+2}_{loc}$ is a solution to the direct scattering problem \eqref{DSP} in the sense of distribution. Next we show that $\tilde u^{sc} $ satisfies the Lippmann-Schwinger equation \begin{align}\label{LSEQ}
    \tilde u^{sc}(x)=-\int_{\mathbb R^3}\Phi_k(x,y) V(y)(e^{{\rm i}kd \cdot y}+\tilde u^{sc}(y))\,\mathrm{d}y.
\end{align}
To this end,  
choose a sequence  $\{\phi_n\}_{n=1}^\infty \subset C^\infty(\mathbb R^3)$ such that $\phi_n \to \tilde u^{sc} $ in $H^{\alpha+2}_{loc}$. Then using integration by parts gives \begin{align}\label{r1}
   &\int_{B_R} (\Delta+k^2) \phi_n(y) G(x,y,k)\,\mathrm{d}y \notag\\
   &=\phi_n(x)+\int_{\partial B_R}\left(\partial_{\nu(y)} \phi_n(y) \Phi_k(x,y) - \phi_n(y)\partial_{\nu(y)} \Phi_k(x,y)\right){\rm d}s\notag\\
 &\quad + \int_{\partial B_R} \left(\partial_{\nu (y)}\phi_n(y)  \Phi_k(x,y) - \phi_n(y)\partial_{\nu(y)}  \Phi_k(x,y)\right) {\rm d}s \quad \text{for} \,x \in B_R.
\end{align}
From Sobolev embedding theorem we have $\tilde u^{sc} \in H^{\alpha+2}(B_R) \subset C(B_R)$ for $\alpha>-1/2$, which implies $|\phi_n(x) \to \tilde u^{sc}(x)| 
\to 0$ for any $x \in B_R$. 
Noticing $(\Delta+k^2)\tilde u^{sc} \in H_{comp}^\alpha (B_R)$, by applying \cite[Lemma 3.2]{wang2024stability} and Sobolev embedding theorem we obtain $\Phi_k(x,\cdot) \in H^{-\alpha}(B_R)$ as $\alpha>-\frac{1}{2}$, which gives \[
\lim_{n\to\infty}\int_{B_R} (\Delta+k^2) \phi_n(y) \Phi_k(x,y)\,\mathrm{d}y=\int_{B_R} (\Delta
+k^2) \tilde u^{sc}(y) \Phi_k(x,y)\,\mathrm{d}y. 
\] 
 Therefore, by letting $n \to \infty$ in \eqref{r1} we arrive at \begin{align*}
    \tilde u^{sc}(x) &=\int_{B_R} (\Delta+k^2) \tilde u^{sc}(y) \Phi_k(x,y)\,\mathrm{d}y\\
    &\quad+\int_{\partial B_R}\left(\partial_{\nu(y)}  \tilde u^{sc}(y) \Phi_k(x,y)-\tilde u^{sc}(y)\partial_{\nu(y)} \Phi_k(x,y)\right){\rm d}s\\
 &\quad + \int_{\partial B_R} \left(\partial_{\nu (y)}\tilde u^{sc}(y)  \Phi_k(x,y) - \tilde u^{sc}(y)\partial_{\nu(y)}  \Phi_k(x,y)\right)
 {\rm d}s,\quad x \in B_R,
\end{align*} 
which yields the Lippmann-Schwinger equation \eqref{LSEQ}
by taking $R \to \infty$ and using the radiation condition.

At last, we show that when $k$ is sufficient large, the Lippmann-Schwinger equation admits a unique solution, which ensures the uniqueness of the scattering problem. 
It suffices to verify that the homogeneous equation \begin{align*}
     v(x)=-\int_{\mathbb R^3}\Phi_k(x,y)V(y)v(y)\,\mathrm{d}y.
\end{align*} only admits the trivial solution in $H^{-\alpha}_{loc}$. The above equation can be rewritten as 
\[ v= R_0(k) \chi V v. \]
Then applying Lemma \ref{Lem4.1}--\ref{Lem5.1} gives \begin{align*}
    \|\chi v\|_{H^{-\alpha}} \le \frac{C(\chi)}{k^{2\alpha+1}}\|\chi v\|_{H^{-\alpha}},
\end{align*} which implies $\chi v \equiv 0$ as $k$ is large enough. Thus, noting $\chi=1$ in $D$, we have $v=0 $ in $D$. Since $v$ satisfies the Helmholtz equation 
\[
(\Delta+k^2)v=0 \quad \text{in} \quad \mathbb R^3 \backslash \overline{D},
\]
and $ \text{supp}\,V\subset\subset D$,
 the unique continuation for elliptic equations implies $v \equiv 0$ in $\mathbb R^3$.
The proof is complete.
\end{proof}

Based on the above analysis, the scattered field $u^{sc}$ admits a Born
series expansion, which is useful for the subsequent study of the inverse problem. Indeed, we have \begin{align}\label{bs}
u^{sc}: &=R_{ V}(k)( Vu^{inc}) = R_{ V}(k)\chi(Vu^{inc})\notag\\ &=R_0(k) \chi_1 (I  + VR_0(k)\chi_1)^{-1}\chi( Vu^{inc}) \notag\\ &= -R_0(k)\sum_{j=0}^\infty(-VR_0(k))^j ( Vu^{inc}),
\end{align} 
and from Lemma \ref{Lem4.1}--\ref{Lem5.1} the series \begin{align*}
    \sum_{j=0}^\infty(-  VR_0(k))^j (  Vu^{inc})=\sum_{j=0}^\infty(-  V\chi R_0(k)\chi)^j (Vu^{inc})
\end{align*} converges in $H^\alpha$ for sufficiently large $k$.

\section{Stability for the inverse random potential problem}\label{IP}
In this section, we prove the stability estimate in Theorem \ref{Thm5.1}. We begin by studying the Born series expansion.
For convenience, 
in the following analysis, we drop the subscripts of $V_j$ and $u^\infty_{(j)}$. 
The Born series expansion \eqref{bs} gives \begin{align}\label{5.2.2}
	u^\infty(\hat{x},\theta,k)&= \frac{1}{4\pi}  \int_{\mathbb{R}^3} e^{-ik\hat{x} \cdot y}V(y)\sum_{j\ge 0}[(V R_0(k) )^{j} u^{inc}](y)\,\mathrm{d}y \notag\\ &=\frac{1}{4\pi}  \int_{\mathbb{R}^3} e^{-ik(\hat{x}-\theta) \cdot y} V(y)\,\mathrm{d}y+\frac{1}{4\pi}  \int_{\mathbb{R}^3} e^{-ik\hat{x} \cdot y}V(y)[R_0(k)V u^{inc}](y)\,\mathrm{d}y \notag\\ &\quad +\frac{1}{4\pi} \int_{\mathbb{R}^3} e^{-ik\hat{x} \cdot y}\sum^\infty_{j=2}[(R_0(k) V)^{j} u^{inc}](y)\,\mathrm{d}y \notag\\ &:=u_0^\infty(\hat{x};\theta,k) + u_1^\infty(\hat{x};\theta,k) + u_2^\infty(\hat{x};\theta,k).
\end{align} 
\subsection{Analysis of the correlation of the zeroth order term in Born series}
In this section, we study the correlation of the zeroth order term in Born series expansion. A novel ergodicity result is proved, which is important in the derivation of the stability.

Letting $\hat{x}=-\theta$, which means only the backscattering data is utilized, from direct calculations we obtain \begin{align*}
	&\mathbb E[u_0^\infty(-\theta,\theta, k) \overline{u_0^\infty(-\theta,\theta, k+\tau)}]\\
	&=\frac{1}{16\pi^2}\int_{\mathbb R^3}\int_{\mathbb R^3} e^{-2{\rm i}(k+\tau)\theta \cdot y}  e^{2{\rm i}k\theta \cdot z}
	\mathbb E[V(y)V(z)]{\rm d}y{\rm d}z\\
	&= \frac{1}{16\pi^2}\int_{\mathbb R^3} \Big[\int_{\mathbb R^3} K_V(y, z)e^{2{\rm i}k\theta\cdot z}{\rm d}z\Big]
	e^{-2{\rm i}(k+\tau)\theta \cdot y}{\rm d}y \\
	&= \frac{1}{16\pi^2} \Big[\int_{\mathbb R^3} h(y) e^{-2{\rm i}\tau\theta\cdot y}{\rm d}y |2k\theta|^{-m} + \int_{D} a_V(y, 2k\theta) 
	e^{-2{\rm i}\tau\theta\cdot y}{\rm d}y\Big]\\
	&= \frac{1}{16\pi^2} \Big[(2k)^{-m}\widehat{h}(2\tau\theta) + \int_{D} a_V(y, 2k\theta) 
	e^{-2{\rm i}\tau\theta\cdot y}{\rm d}y\Big],\end{align*} which implies that \begin{equation}\label{firststep}
\widehat{h}(2\tau\theta)=(16\pi^2)(2k)^{m} \mathbb E[u_0^\infty(-\theta,\theta, k) \overline{u_0^\infty(-\theta,\theta, k+\tau)}]+\mathcal{O}(1/k)
    \end{equation} uniformly holding for any $\tau>0$, $\theta \in \mathbb S^2$ and \textcolor{blue}{ $V\in\mathcal V$.} In \cite{Caro}, the authors have used an ergodicity theorem to show that almost surely \begin{equation*}
	\lim_{k \to \infty}\frac{1}{k}\int_k^{2k}s^{m}(\overline{u_0^\infty(-\theta,\theta, s+\tau)} u_0^\infty(-\theta,\theta, s)-\mathbb{E}[\overline{u_0^\infty(-\theta,\theta, s+\tau)} u_0^\infty(-\theta,\theta,s)])\,\mathrm{d}s=0.
\end{equation*} 
To further derive the stability, we may need to quantify the above limit. 
To this end, we prove an ergodicity result which improves the one utilized in \cite{Caro} and traces back to \cite[p. 94]{cramer1967stationary}.
\begin{Le}\label{Thm2.1}
	Let $X_t$ with $t \ge 0$ be a real-valued stochastic process with continuous paths. Furthermore, the process $X_t$ satisfies $\mathbb{E}X_t=0$ and there exist non-negative constants $c_1$ and $c_2$ satisfying $0 \le 2 c_1 < c_2 <1$ such that \[
	| \mathbb{E}(X_s X_t) |\lesssim \frac{s^{c_1}+t^{c_1}}{1+|s-t|^{c_2}}
	\] for any $s,t  \ge 0$. Then for any positive $\epsilon$ satisfying $\epsilon < (c_2-2c_1) /4$, we almost surely have \[
	\lim_{T \to \infty} \frac{1}{T^{1-\epsilon}} \int_0^T X_t \,\mathrm{d}t =0
	.\]
\end{Le}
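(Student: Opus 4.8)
The plan is to follow the classical ergodicity argument (as in Cramér–Leadbetter) but tracked quantitatively so that the polynomial growth $s^{c_1}$ in the covariance bound is absorbed. First I would discretize: set $T_n = n$ (or, to gain a little room, $T_n = n^\gamma$ for a suitable $\gamma>1$ to be chosen), and define $Y_n := \frac{1}{T_n^{1-\epsilon}}\int_0^{T_n} X_t\,\mathrm dt$. The key is to estimate $\mathbb E|Y_n|^2$. Expanding the square and using Fubini,
\[
\mathbb E|Y_n|^2 = \frac{1}{T_n^{2-2\epsilon}}\int_0^{T_n}\int_0^{T_n}\mathbb E(X_sX_t)\,\mathrm ds\,\mathrm dt
\lesssim \frac{1}{T_n^{2-2\epsilon}}\int_0^{T_n}\int_0^{T_n}\frac{s^{c_1}+t^{c_1}}{1+|s-t|^{c_2}}\,\mathrm ds\,\mathrm dt.
\]
By symmetry it suffices to bound $\int_0^{T_n}\int_0^{T_n}\frac{t^{c_1}}{1+|s-t|^{c_2}}\,\mathrm ds\,\mathrm dt$. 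Carrying out the inner integral in $s$ over an interval of length $\le T_n$ gives $\int_0^{T_n}\frac{\mathrm ds}{1+|s-t|^{c_2}}\lesssim T_n^{1-c_2}$ (since $c_2<1$), and then $\int_0^{T_n} t^{c_1}T_n^{1-c_2}\,\mathrm dt\lesssim T_n^{c_1+1}T_n^{1-c_2}=T_n^{2+c_1-c_2}$. Hence
\[
\mathbb E|Y_n|^2\lesssim T_n^{c_1-c_2+2\epsilon}.
\]
Since $\epsilon<(c_2-2c_1)/4<(c_2-c_1)/2$, the exponent $c_1-c_2+2\epsilon$ is strictly negative; call it $-2\eta$ with $\eta>0$. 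With $T_n=n$ this gives $\mathbb E|Y_n|^2\lesssim n^{-2\eta}$, which is summable only if $\eta>1/2$, so one must choose $T_n=n^\gamma$ with $\gamma$ large enough that $\sum_n T_n^{-2\eta}=\sum_n n^{-2\gamma\eta}<\infty$; any $\gamma>1/(2\eta)$ works. By the Borel–Cantelli lemma (via Chebyshev, $\mathbb P(|Y_n|>\delta)\le \delta^{-2}\mathbb E|Y_n|^2$ for each fixed $\delta$, then a diagonal argument over $\delta=1/j$), we get $Y_n\to 0$ almost surely along the subsequence $T_n$.

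It remains to pass from the subsequence to the full limit, and this is the step where the continuity of paths and the choice of $\epsilon$ (strictly smaller than the "critical" value) do real work. For $T\in[T_n,T_{n+1}]$ write
\[
\frac{1}{T^{1-\epsilon}}\int_0^T X_t\,\mathrm dt = \frac{T_n^{1-\epsilon}}{T^{1-\epsilon}}Y_n + \frac{1}{T^{1-\epsilon}}\int_{T_n}^T X_t\,\mathrm dt,
\]
and the first term tends to $0$ a.s.\ because $T_n/T\to 1$ (here $\gamma$ fixed makes $T_{n+1}/T_n\to 1$). For the second term I would control the maximal oscillation $M_n:=\sup_{T\in[T_n,T_{n+1}]}\big|\int_{T_n}^T X_t\,\mathrm dt\big|$. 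The crude bound $M_n\le\int_{T_n}^{T_{n+1}}|X_t|\,\mathrm dt$ combined with $\mathbb E\int_{T_n}^{T_{n+1}}|X_t|\,\mathrm dt\le \int_{T_n}^{T_{n+1}}(\mathbb E X_t^2)^{1/2}\,\mathrm dt\lesssim \int_{T_n}^{T_{n+1}} t^{c_1/2}\,\mathrm dt$ (taking $s=t$ in the hypothesis, which gives $\mathbb E X_t^2\lesssim t^{c_1}$) yields $\mathbb E M_n\lesssim (T_{n+1}-T_n)T_{n+1}^{c_1/2}\lesssim n^{\gamma-1}n^{\gamma c_1/2}$. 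Dividing by $T^{1-\epsilon}\asymp n^{\gamma(1-\epsilon)}$ gives an expected size $\lesssim n^{-1-\gamma(1-\epsilon) + \gamma(1+c_1/2)} = n^{\gamma(\epsilon+c_1/2)-1}$; choosing $\gamma$ not too large (the constraints $\gamma>1/(2\eta)$ and $\gamma(\epsilon+c_1/2)<1$ are compatible since $\epsilon+c_1/2$ is small when $\epsilon<(c_2-2c_1)/4$ and $c_1$ is fixed — one checks $\tfrac{1}{2\eta}(\epsilon+c_1/2)<1$, i.e.\ $\epsilon+c_1/2<2\eta=c_2-c_1-2\epsilon$, i.e.\ $3\epsilon+3c_1/2<c_2-c_1/2$; this needs a slightly sharper bookkeeping, possibly sharpening the double-integral estimate or restricting $\epsilon$ marginally) makes $\sum_n\mathbb P(M_n/T_n^{1-\epsilon}>\delta)<\infty$ and another Borel–Cantelli finishes the job.

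The main obstacle I anticipate is exactly this reconciliation of exponents in the second (subsequence-to-continuum) step: the margin given by $\epsilon<(c_2-2c_1)/4$ must simultaneously feed the summability of $\mathbb E|Y_n|^2$ and the control of the oscillation $M_n$, and a naive choice of the sampling exponent $\gamma$ may not satisfy both. The fix, if needed, is to sharpen the double-integral estimate — e.g.\ splitting the region $\{|s-t|\le 1\}$ from $\{|s-t|>1\}$ and integrating more carefully, or using a Cauchy–Schwarz/block decomposition of $[0,T]$ into dyadic pieces — to reduce the effective exponent, and/or to note that the oscillation term can be estimated on the dyadic scale $T_n=2^n$ with $\mathbb E|Y_n|^2$ only needing to be $o(1)$ (not summable) along dyadic points if one instead runs a Borel–Cantelli argument on the blocks $\max_{T\in[2^n,2^{n+1}]}|\cdot|$ directly. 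The rest — Fubini, Chebyshev, Borel–Cantelli, Cauchy–Schwarz for $\mathbb E|X_t|$ — is routine.
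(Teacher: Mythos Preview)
Your strategy coincides with the paper's: estimate $\mathbb E Y_{T}^2\lesssim T^{-(c_2-c_1-2\epsilon)}$, sample along $T_n=n^\gamma$ with $\gamma(c_2-c_1-2\epsilon)>1$ so that Chebyshev plus Borel--Cantelli give $Y_{T_n}\to 0$ a.s., and then control the oscillation on $[T_n,T_{n+1}]$. The first part is correct and matches the paper essentially verbatim.

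The gap is in the oscillation step. You bound $M_n\le\int_{T_n}^{T_{n+1}}|X_t|\,\mathrm dt$ and compute the \emph{first} moment $\mathbb E(M_n/T_n^{1-\epsilon})\lesssim n^{\gamma(\epsilon+c_1/2)-1}$, and then write that the constraint $\gamma(\epsilon+c_1/2)<1$ makes $\sum_n\mathbb P(M_n/T_n^{1-\epsilon}>\delta)<\infty$. It does not: via Markov with the first moment, summability would require $\gamma(\epsilon+c_1/2)<0$, which is impossible. The condition you wrote only makes the expectation tend to zero. Your fallback suggestion of dyadic sampling $T_n=2^n$ does not help either, since then $T_{n+1}-T_n\asymp T_n$ and the oscillation term is no longer small.

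The fix is exactly what the paper does: estimate the \emph{second} moment. Writing $W_n=\sup_{T_n\le T\le T_{n+1}}|Y_T-Y_{T_n}|$, one squares and uses $\mathbb E|X_sX_t|\le(\mathbb E X_s^2)^{1/2}(\mathbb E X_t^2)^{1/2}\lesssim (st)^{c_1/2}$ on both the $\int_0^{T_n}$ and $\int_{T_n}^{T_{n+1}}$ pieces to obtain $\mathbb E W_n^2\lesssim n^{-(2-\gamma(c_1+2\epsilon))}$. Summability then requires $\gamma(c_1+2\epsilon)<1$, and the existence of $\gamma$ satisfying simultaneously $\gamma(c_1+2\epsilon)<1<\gamma(c_2-c_1-2\epsilon)$ is equivalent to $c_1+2\epsilon<c_2-c_1-2\epsilon$, i.e.\ precisely the hypothesis $\epsilon<(c_2-2c_1)/4$. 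So the factor $4$ in the statement is not slack to be tightened by ``sharper bookkeeping'' --- it is exactly what the two-sided constraint on $\gamma$ demands once you run Chebyshev at second moment on both $Y_{T_n}$ and $W_n$.
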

\begin{proof}
	Denoting $Y_T=\frac{1}{T^{1-\epsilon}}\int_0^T X_t \,\mathrm{d}t$, we have  \begin{align*}
		\mathbb{E}Y_T^2 &=\frac{1}{T^{2-2\epsilon}} \int_0^T \int_0^T \mathbb{E}({X}_t {X}_s)\,\mathrm{d}t\,\mathrm{d}s \lesssim \frac{1}{T^{2-2\epsilon} }\int_0^T \int_0^T \frac{s^{c_1}+t^{c_1}}{1+|s-t|^{c_2}} \,\mathrm{d}t\,\mathrm{d}s \\ &\lesssim \frac{1}{T^{1-{c_1}-2\epsilon}}  \int_0^T \frac{1}{1+\tau^{c_2}}\,\mathrm{d}\tau \lesssim \frac{1}{T^{c_2-c_1-2\epsilon}}.
	\end{align*}
	Recalling ${c_2}-{c_1}-2\epsilon>0$, there exists a constant$\gamma>0$ such that $\gamma({c_2}-{c_1}-2\epsilon)>1$. Letting $T_n=n^\gamma$, we have \[
	\sum_{n=1}^\infty \mathbb{E}Y_{T_n}^2 \lesssim \sum_{n=1}^\infty \frac{1}{n^{\gamma({c_2}-{c_1}-2\epsilon)}} < \infty.
	\] Then applying the  Borel-Cantelli lemma and Chebyshev inequality, we derive \[
	\lim_{T_n \to \infty} \frac{1}{T_n^{1-\epsilon}} \int_0^{T_n} X_t \,\mathrm{d}t =0.
	\] Therefore, it suffices to show that almost surely we have \begin{equation}\label{2.3}
		\lim_{n\to \infty} \sup_{T_n \le T \le T_{n+1}} \left|\frac{1}{T^{1-\epsilon}} \int_0^T X_t \,\mathrm{d}t-\frac{1}{T_n^{1-\epsilon}} \int_0^{T_n} X_t \,\mathrm{d}t \right| =0.
	\end{equation}
	Denoting 
	$
	W_n=\sup_{T_n \le T \le T_{n+1}} \left|\frac{1}{T^{1-\epsilon}} \int_0^T X_t \,\mathrm{d}t-\frac{1}{T_n^{1-\epsilon}} \int_0^{T_n} X_t \,\mathrm{d}t \right|
	$, we have
	\begin{align*}
		W_n  &=  \sup_{T_n \le T \le T_{n+1}} \left| \left(\frac{1}{T^{1-\epsilon}}-\frac{1}{T_n^{1-\epsilon}} \right)\int_0^{T_n} X_t \,\mathrm{d}t+\frac{1}{T^{1-\epsilon}} \int_{T_n}^T X_t \,\mathrm{d}t \right| \\ &\le \frac{T_{n+1}^{1-\epsilon}-T_n^{1-\epsilon}}{T_n^{2-2\epsilon}} \int_0^{T_n} |X_t| \,\mathrm{d}t + \frac{1}{T_n^{1-\epsilon}} \int_{T_n}^{T_{n+1}} |T(t)|\,\mathrm{d}t,
	\end{align*} which gives \begin{align*}
		W_n^2 \le 2 \frac{(T_{n+1}^{1-\epsilon}-T_n^{1-\epsilon})^2}{T_n^{4-4\epsilon}} \int_0^{T_n}\int_0^{T_n} |X_t X_s|\,\mathrm{d}t\,\mathrm{d}s + \frac{2}{T_n^{2-2\epsilon}}\int_{T_n}^{T_{n+1}}\int_{T_n}^{T_{n+1}}|X_tX_s|\,\mathrm{d}t\,\mathrm{d}s.
	\end{align*}
	Using the estimate $
	\mathbb{E} |X_t X_s| \le \sqrt{\mathbb{E} |X_t|^2 \mathbb{E}|X_s|^2} \lesssim   (ts)^{c_1/2},
	$ we obtain \begin{align*}
		\mathbb{E} Y_n^2 \lesssim  T_n^{2+c_1}\frac{(T_{n+1}^{1-\epsilon}-T_n^{1-\epsilon})^2}{T_n^{4-4\epsilon}} + \frac{1 }{T_n^{2-2\epsilon}} (T_{n+1}^{1+c_1/2}-T_n^{1+c_1/2})^2.
	\end{align*} Since $T_n=n^\gamma$, direct calculation yields \[ \mathbb{E} W_n^2  \lesssim \frac{1}{n^{2-\gamma c_1-2\epsilon\gamma}}.\] AS $c_1+2 \epsilon < c_2-c_1-2\epsilon$, there exists $\gamma>0$ such that $\gamma(c_1+2 \epsilon)<1<\gamma(c_2-c_1-2\epsilon)$, which implies \[
	\sum_{n=1}^\infty \mathbb{E} W_n^2  \lesssim \sum_{n=1}^\infty \frac{1}{n^{2-\gamma c_1-2\epsilon\gamma}}< \infty.
	\] Finally, by applying the Borel-Cantelli lemma and the Chebyshev inequality again we obtain \eqref{2.3}. The proof is complete.
\end{proof} 

Next,
letting $
Z_k=u^\infty(-\theta,\theta, k)$, a direct calculation gives \begin{equation}\label{3.4}
	\overline{u_0^\infty(-\theta,\theta, k+\tau)} u_0^\infty(-\theta,\theta, k)-\mathbb{E}[\overline{u_0^\infty(-\theta,\theta, k+\tau) }u_0^\infty(\hat{x}, k)]= \sum_{U_k \in \mathcal{A}} C(U_k) (U_k^2-\mathbb E U_k^2),
\end{equation} where \begin{align*} \mathcal{A}:=\{\Re Z_k, \Im Z_k, \Re Z_{k+\tau}, \Im Z_{k+\tau},  \Re Z_k-\Re Z_{k+\tau}, \\ \Im Z_k-\Im Z_{k+\tau},\Re Z_{k+\tau}+ \Im Z_k, \Im Z_{k+\tau}-\Re Z_k\}\end{align*} and the constant $C(U_k) \in \{ \frac{1+i}{2},-\frac{1}{2},-\frac{i}{2}\}$.
Let $X_k:=k^{m} (U_k^2-\mathbb E U_k^2)$. According to \cite[Lemma 3.4]{li2022far}, for $V \in \mathcal V$, there exist a constant $C$ which does not depend on $\theta,\tau, k_1,k_2$ and $V$ such that \[
\mathbb{E}(X_{k_1} X_{k_2}) \le C \left((1+|k_1-k_2-\tau|)^{-1}+(1+|k_1-k_2+\tau|)^{-1}+(1+|k_1-k_2|)^{-1} \right).\]  Assuming that $\tau \in (0,1/2)$, 
the above estimate can be turned into \begin{equation}\label{unies}
\mathbb{E}(X_{k_1} X_{k_2}) \le C (1+|k_1-k_2|)^{-1} ,
\end{equation} where the constant $C$ does not depend on $\tau, k_1,k_2$ and $V$ either. Hence, applying Lemma \ref{Thm2.1} to $X_k$ gives 
\begin{equation*}
	\lim_{k \to \infty}\frac{1}{k^{1-\epsilon}}\int_k^{2k}s^{m}(\overline{u_0^\infty(-\theta,\theta, s+\tau)} u_0^\infty(-\theta,\theta, s)-\mathbb{E}[\overline{u_0^\infty(-\theta,\theta, s+\tau)} u_0^\infty(-\theta,\theta,s)])\,\mathrm{d}s=0
	\end{equation*} for any $\epsilon \in (0,1/4).$ It should be noticed that the above limit may not be uniformly valid for $\theta \in \mathbb S^2$, $\tau \in (0,1/2)$ and $V \in \mathcal V$, though the estimate \eqref{unies} uniformly holds. Thus, the ergodicity theory
	does not yield an almost surely stability result. To deal with this issue, by checking the proof of Lemma \ref{Thm2.1} we notice that it is possible to 
	derive a stability result in the probabilistic sense. In fact, denoting 
	\[Y_k:=\frac{1}{k^{1-\epsilon}}\int_k^{2k} X_s \,\mathrm{d}s,\]
	the proof of Lemma \ref{Thm2.1} implies that 
	\begin{align}\label{ps}
	\mathbb P(\omega: \exists \,  k \ge \tilde k, |Y_{ k}| \ge 1) \le \mathbb P(\omega: \cup_{m=n}^\infty\{|Y_{k_m}| \ge 1/2\})+\mathbb P(\omega: \cup_{m=n}^\infty\{|{W_m}| \ge 1/2\}),
\end{align} 
where $k_m=m^\gamma$, $W_m=\sup_{k_m \le k \le k_{m+1}} \left|\frac{1}{k^{1-\epsilon}} \int_k^{2k} X_t \,\mathrm{d}t-\frac{1}{k_m^{1-\epsilon}} \int_{k_m}^{2k_m} X_t \,\mathrm{d}t \right|$. Here $\tilde k$ is a positive constant and $n \in \mathbb N$ is chosen such that $\tilde k \in [n^\gamma, (n+1)^\gamma]$. Moreover, $\gamma$ is chosen such that $2\epsilon\gamma<1<\gamma(c_2-2\epsilon)$, where $c_2$ is a constant in $ (0,1)$ and $\epsilon <c_2/4$. Then we have \begin{align*}
    \mathbb P(\omega: \exists \,k \ge \tilde k, |Y_{ k}| \ge 1) \lesssim \frac{1}{n^{\gamma(c_2-2\epsilon)-1}}+\frac{1}{n^{1-2\epsilon\gamma}} \lesssim \frac{1}{{\tilde k}^{c_2-2\epsilon-1/\gamma}}+\frac{1}{{\tilde k}^{1/\gamma-2\epsilon}} 
\end{align*} by the proof of Lemma \ref{Thm2.1}. As a consequence, we obtain \begin{align}\label{pes}
    \mathbb P(\exists \, k \ge \tilde k, |Y_{k}| \ge 1) \lesssim  \frac{1}{\tilde k^{a(\epsilon)}},
\end{align} where $a(\epsilon):=\min\{c_2-2\epsilon-1/\gamma,1/\gamma-2\epsilon\} \in (0,1)$. It should also be noticed that the above estimate \eqref{pes} holds uniformly for any $\theta \in \mathbb S^2$, $\tau \in (0,1/2)$ and $V \in\mathcal V.$

On the other hand, when $|Y_{k}| < 1$ holds for all $k \ge \tilde k$ and $U_{ k} \in \mathcal A$, there holds the estimate \begin{align}\label{conditional}
    \left|\frac{1}{ k}\int_{k}^{2 k}s^{m}(\overline{u_0^\infty(-\theta,\theta, s+\tau)} u_0^\infty(-\theta,\theta, s)-\mathbb{E}[\overline{u_0^\infty(-\theta,\theta, s+\tau)} u_0^\infty(-\theta,\theta,s)])\,\mathrm{d}s\right| \lesssim \frac{1}{k^{\epsilon}}.
\end{align} Therefore, combining \eqref{firststep} and \eqref{conditional}, we have the following result. \begin{Le}\label{zeroorder} For any $\epsilon \in (0,1/4)$, if $|Y_{k}| < 1$ holds for all $k \ge \tilde k$, $U_{ k} \in \mathcal A$, then \begin{align}\label{4.12}
	\widehat{h}(2\tau\theta)=\frac{(16\pi^2)}{k}\int_k^{2k}(2s)^{m} [u_0^\infty(-\theta,\theta, s) \overline{u_0^\infty(-\theta,\theta, s+\tau)}]\,\mathrm{d}s+\mathcal{O}(1/k^\epsilon)
\end{align} uniformly holds for all $\tau \in (0,1/2)$, $\theta \in \mathbb S^2$ and $V \in \mathcal V$. \end{Le}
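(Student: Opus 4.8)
The plan is to derive \eqref{4.12} by combining the frequency-pointwise identity \eqref{firststep} with the quantitative ergodic estimate \eqref{conditional}. First I would rewrite \eqref{firststep} with $k$ replaced by a running variable $s$ and divide by $16\pi^2$, giving
\[
(2s)^{m}\,\mathbb E[u_0^\infty(-\theta,\theta, s)\overline{u_0^\infty(-\theta,\theta, s+\tau)}]=\frac{1}{16\pi^2}\widehat h(2\tau\theta)+\mathcal O(1/s),
\]
where the error term is uniform in $s>0$, $\tau>0$, $\theta\in\mathbb S^2$ and $V\in\mathcal V$, since it is precisely the $a_V$-contribution in the oscillatory-integral expression for $K_V$, controlled by the a priori bound $|a_V(y,2s\theta)1_{|\xi|\ge1}|\le M_1|2s\theta|^{-(m+1)}$ built into the class $\mathcal V$. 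Averaging this identity in $s$ over the dyadic window $[k,2k]$, dividing by $k$, and using $\frac1k\int_k^{2k}\mathcal O(1/s)\,\mathrm ds=\mathcal O(1/k)$, I obtain
\[
\frac{16\pi^2}{k}\int_k^{2k}(2s)^{m}\,\mathbb E[u_0^\infty(-\theta,\theta, s)\overline{u_0^\infty(-\theta,\theta, s+\tau)}]\,\mathrm ds=\widehat h(2\tau\theta)+\mathcal O(1/k).
\]

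Next, on the event $\{|Y_k|<1 \text{ for all } k\ge\tilde k \text{ and all } U_k\in\mathcal A\}$ the estimate \eqref{conditional} is available; since $(2s)^{m}=2^{m}s^{m}$ differs from $s^{m}$ only by a fixed constant, \eqref{conditional} yields
\[
\left|\frac{16\pi^2}{k}\int_k^{2k}(2s)^{m}\Big(u_0^\infty(-\theta,\theta, s)\overline{u_0^\infty(-\theta,\theta, s+\tau)}-\mathbb E[u_0^\infty(-\theta,\theta, s)\overline{u_0^\infty(-\theta,\theta, s+\tau)}]\Big)\,\mathrm ds\right|\lesssim\frac{1}{k^\epsilon},
\]
with an implied constant that (up to the factor $16\pi^2 2^{m}$) is the constant $C$ appearing in the uniform covariance bound \eqref{unies}, hence independent of $\tau$, $\theta$ and $V$. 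Adding the last two displays and observing that $\epsilon\in(0,1/4)\subset(0,1)$, so that $\mathcal O(1/k)$ is absorbed into $\mathcal O(1/k^\epsilon)$, gives \eqref{4.12} with an error uniform over $\tau\in(0,1/2)$, $\theta\in\mathbb S^2$ and $V\in\mathcal V$.

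There is no genuine analytic obstacle at this stage: the substance is already contained in the derivation of \eqref{firststep} (via the symbol expansion of $K_V$ and the a priori class $\mathcal V$) and in Lemma \ref{Thm2.1} together with the uniform covariance bound \eqref{unies}. The only point demanding care is the error bookkeeping — verifying that averaging the pointwise $\mathcal O(1/s)$ over $[k,2k]$ produces a genuinely $\theta$-, $\tau$-, and $V$-uniform $\mathcal O(1/k)$, that it is dominated by the $\mathcal O(k^{-\epsilon})$ coming from the ergodic average on the stated event, and that the restriction $\tau\in(0,1/2)$ is exactly what makes \eqref{unies}, and hence \eqref{conditional}, hold with a $\tau$-independent constant.
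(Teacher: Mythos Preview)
Your proposal is correct and follows essentially the same approach as the paper: the paper's own proof is simply the sentence ``combining \eqref{firststep} and \eqref{conditional}, we have the following result,'' and your write-up fills in exactly those two steps --- averaging the pointwise identity \eqref{firststep} over $[k,2k]$ and then invoking the conditional ergodic bound \eqref{conditional}. The only minor inaccuracy is your attribution of the implied constant in \eqref{conditional} to the covariance bound \eqref{unies}; in fact that constant comes directly from the hypothesis $|Y_k|<1$ together with the finite decomposition \eqref{3.4}, while \eqref{unies} is used elsewhere to control $\mathbb P(|Y_k|\ge1)$ --- but this does not affect the validity of your argument.
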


\subsection{Analysis of the remaining terms in Born series}
In view of Lemma \ref{zeroorder}, we shall estimate the difference between the correlation far-field pattern and the correlation of the zeroth order term in Born series. To achieve this goal, we investigate the first and second order terms of the form $\frac{1}{k}\int_k^{2k} s^{m} |u_j^\infty(-\theta,\theta,s)|^2\,\mathrm{d}s$ for $j=1, 2$.

We begin with the first order term.
The strategy is to not estimate directly $$\frac{1}{k}\int_k^{2k} s^{m} |u_1^\infty(-\theta,\theta,s)|^2\,\mathrm{d}s,$$ but to estimate the average term 
 $$\frac{1}{k}\int_{\mathbb S^2}\int_k^{2k} s^{m} |u_1^\infty(-\theta,\theta,s)|^2\,\mathrm{d}\theta\,\mathrm{d}s.$$ Before doing so, we give a useful lemma in \cite{wang2024stability}, which shows the kernel $K_V(x,y)$ can be decomposed as a sum of a singular part and a bounded continuous remainder. 
 \begin{Le}\label{Lem2.2}
For $V \in \mathcal V$, the covariance function $K_V(x,y)$ has the following form:
	
	(i)  If $a<m-2\le a+1$ with $a=1,2,3...$,  \[
	K_V(x, y)= ch(x)|x-y|^{m-3} +F_m(x,y),
	\] where $c$ is a constant dependent on $m$ and $F_m(x,y) \in C^{a,\gamma}(\mathbb{R}^3 \times \mathbb{R}^3)$ with $\gamma \in (0, m-2-a)$.
	
	(ii)
	If $2<m< 3$,  \[
	K_V(x, y)= ch(x)|x-y|^{m-3} +F_m(x,y),
	\] where $c$ is a constant dependent on $m$ and $F_m(x,y) \in C^{0,\gamma}(\mathbb{R}^3 \times \mathbb{R}^3)$ with $\gamma \in (0, m-2)$. 
	
	(iii)
	If $m=3$,  \[
	K_V(x, y)= ch(x)\log{|x-y|} +F_m(x,y),
	\] where $c$ is a constant dependent on $F_m(x,y) \in C^{0,\gamma}(\mathbb{R}^3 \times \mathbb{R}^3)$ with $\gamma \in (0, 1)$. 
	
	For all of the above three cases, we have \[
	\|F_m\|_{L^\infty(D \times D)} \lesssim M_1. 
	\]
\end{Le}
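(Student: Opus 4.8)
The plan is to compute the oscillatory integral
\[
K_V(x,y)=(2\pi)^{-3}\int_{\mathbb{R}^3}e^{i(x-y)\cdot\xi}c_V(x,\xi)\,\mathrm{d}\xi,\qquad z:=x-y,
\]
by splitting the symbol according to the a priori structure of $\mathcal V$. Writing $c_V=h(x)|\xi|^{-m}+a_V(x,\xi)$ for $|\xi|\ge1$, I decompose $K_V=I_1+I_2+I_3$ with
\[
I_1=(2\pi)^{-3}h(x)\int_{|\xi|\ge1}e^{iz\cdot\xi}|\xi|^{-m}\,\mathrm{d}\xi,\qquad I_2=(2\pi)^{-3}\int_{|\xi|\ge1}e^{iz\cdot\xi}a_V(x,\xi)\,\mathrm{d}\xi,
\]
and $I_3=(2\pi)^{-3}\int_{|\xi|<1}e^{iz\cdot\xi}c_V(x,\xi)\,\mathrm{d}\xi$. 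The singular term $ch(x)|z|^{m-3}$ (or $ch(x)\log|z|$) will come entirely from $I_1$, while $F_m$ will be assembled from $I_2$, $I_3$ and the regular part of $I_1$.

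For $I_1$, passing to spherical coordinates $\xi=r\omega$ and using $\int_{\mathbb{S}^2}e^{irz\cdot\omega}\,\mathrm{d}\sigma(\omega)=4\pi\,\tfrac{\sin(r|z|)}{r|z|}$ reduces $I_1$ to the one-dimensional integral
\[
I_1=(2\pi)^{-3}\,4\pi\,h(x)\,|z|^{m-3}\int_{|z|}^{\infty}s^{1-m}\sin s\,\mathrm{d}s .
\]
For $m\in(2,3)\cup(3,4)$ the integral $\int_0^\infty s^{1-m}\sin s\,\mathrm{d}s$ converges to a constant $C_m$, and expanding $\sin s$ in its power series gives $\int_{|z|}^\infty s^{1-m}\sin s\,\mathrm{d}s=C_m-|z|^{3-m}\sum_{k\ge0}\tfrac{(-1)^k|z|^{2k}}{(2k+1)!(3-m+2k)}$. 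Multiplying back by $|z|^{m-3}$ isolates the homogeneous singularity $ch(x)|z|^{m-3}$ with $c=c(m)$, plus a power series in $|z|^2$ that is real-analytic in $z$ and $C_0^\infty$ in $x$ through $h$; this regular part is absorbed into $F_m$ and is bounded by $M_1$. When $m=3$ the same computation produces $\int_{|z|}^\infty s^{-2}\sin s\,\mathrm{d}s=-\log|z|+\text{const}+O(|z|^2)$, yielding the logarithmic singularity $ch(x)\log|z|$ of case (iii).

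The heart of the argument is $I_2$, which determines the Hölder class of $F_m$. Using only the pointwise bound $|a_V(x,\xi)\mathbf{1}_{|\xi|\ge1}|\le M_1|\xi|^{-(m+1)}$, for any multi-index $|\alpha|\le a$ one differentiates under the integral to obtain $\partial_z^\alpha I_2=(2\pi)^{-3}\int_{|\xi|\ge1}(i\xi)^\alpha e^{iz\cdot\xi}a_V\,\mathrm{d}\xi$, whose integrand decays like $|\xi|^{|\alpha|-m-1}$ and is integrable precisely because $a<m-2$; hence $I_2\in C^a$. For the top-order Hölder seminorm I estimate $\partial_z^\alpha I_2(z)-\partial_z^\alpha I_2(z')$ with $|\alpha|=a$ by inserting the elementary bound $|e^{iz\cdot\xi}-e^{iz'\cdot\xi}|\le\min\{2,|z-z'||\xi|\}$ and splitting the $\xi$-integral at $|\xi|=|z-z'|^{-1}$; both pieces are controlled by $|z-z'|^{m-2-a}$, so $I_2\in C^{a,\gamma}$ for every $\gamma<m-2-a$, matching cases (i)--(ii), while at $m=3$ the borderline split produces the factor $|z-z'|\log(1/|z-z'|)\lesssim|z-z'|^\gamma$ for $\gamma<1$, matching case (iii). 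The term $I_3$ is the Fourier transform of an $L^\infty$ function supported in $|\xi|<1$, hence $C^\infty$ in $z$ with $\|I_3\|_{L^\infty}\lesssim M_1$; the $x$-regularity of $I_2$ and $I_3$ follows from the smoothness of the classical symbol $c_V$ in $x$, the uniform constant being supplied by $\|c_V\|_{C^1_x}\le M_1(1+|\xi|)^{-m}$. Setting $F_m:=I_2+I_3+(\text{regular part of }I_1)$ yields the stated decomposition, and the uniform bounds $\|I_2\|_{L^\infty}\lesssim M_1\int_1^\infty r^{1-m}\,\mathrm{d}r$ (finite since $m>2$), $\|I_3\|_{L^\infty}\lesssim M_1$ and $|\text{regular part of }I_1|\lesssim|h|\lesssim M_1$ combine to give $\|F_m\|_{L^\infty(D\times D)}\lesssim M_1$.

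The main obstacle I anticipate is the Hölder analysis of $I_2$: extracting the \emph{sharp} exponent while simultaneously controlling the joint regularity in $(x,y)$. The $z$-decay argument is clean, but tracking the $x$-dependence requires differentiating $e^{iz\cdot\xi}a_V(x,\xi)$ in $x$, which brings both a factor $i\xi_j a_V$ and the term $\partial_{x_j}a_V$, each decaying only like $|\xi|^{-m}$ and hence integrable exactly when $m>3$; one must then merge the $x$- and $z$-difference estimates into a single joint Hölder bound. This is precisely why case (i) degrades to $C^{1,\gamma}$ for $m\in(3,4)$ and cases (ii)--(iii) to $C^{0,\gamma}$. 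A secondary technical point is pinning down the constant $c=c(m)$ and the borderline convergence of the one-dimensional integrals at $m=3$, which I would handle through the explicit power-series expansion above.
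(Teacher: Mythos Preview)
The paper does not prove this lemma at all; it is quoted verbatim with the attribution ``a useful lemma in \cite{wang2024stability}''.  So there is no in-paper argument to compare against, and your proposal is supplying what the authors simply imported.

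Your route---split $c_V=h|\xi|^{-m}+a_V$ on $|\xi|\ge1$, extract the homogeneous singularity from the radial integral of $|\xi|^{-m}$, and read off the H\"older class of the remainder from the extra decay $|a_V|\lesssim|\xi|^{-(m+1)}$ via the standard ``split at $|\xi|=|z-z'|^{-1}$'' trick---is the expected one and is essentially correct.  Two small points are worth tightening.  First, your sentence ``for $m\in(2,3)\cup(3,4)$ the integral $\int_0^\infty s^{1-m}\sin s\,\mathrm{d}s$ converges'' is false for $m\in(3,4)$: near $0$ the integrand behaves like $s^{2-m}$ with $2-m<-1$, so the integral diverges.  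Your power-series identity $\int_{|z|}^\infty s^{1-m}\sin s\,\mathrm{d}s=C_m-|z|^{3-m}\sum_{k\ge0}\frac{(-1)^k|z|^{2k}}{(2k+1)!(3-m+2k)}$ is nonetheless correct if $C_m$ is interpreted as the finite part (equivalently, the analytic continuation in $m$ from $m<3$); just say so rather than asserting convergence.  Second, you correctly flag the joint $(x,y)$-regularity as the delicate step: with only $\|c_V\|_{C^1_x}\le M_1(1+|\xi|)^{-m}$ you get at most one $x$-derivative under the integral, so the general case $a\ge2$ in (i) cannot be reached from the hypotheses listed in $\mathcal V$; this is a limitation of the stated a priori class, not of your argument, and is irrelevant for the paper since only $m\in(14/5,4)$ (hence $a\le1$) is ever used.
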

The following lemma provides an estimate for the first order reminder.
\begin{Le}\label{firstorder} Assume that $14/5<m<4$ and let $M_0>0$ be an arbitrarily chosen constant. There exists an event $A$ such that when it occurs, there exists a constant $\delta_0 \in (0,1)$ depending on $m$ such that
     \begin{equation}\label{5.3}
	\frac{1}{k^{1-\delta_0}}\int_{\mathbb S^2}\int_k^{2k}s^m|u^\infty_1(-\theta,\theta,s)|^2\,\mathrm{d}s\,\mathrm{d}\sigma(\theta) \le M_0
\end{equation}  and 
     \begin{equation}\label{5.3'}
	\frac{1}{k^{1-\delta_0}}\int_{\mathbb S^2}\int_k^{2k}s^m|u^\infty_1(-\theta,\theta,s+\tau)|^2\,\mathrm{d}s\,\mathrm{d}\sigma(\theta) \le M_0.
\end{equation} Furthermore, the probability of the event $A^c$ can be estimated by \begin{align}\label{evea}
    \mathbb P(A^c) \lesssim \frac{1}{M_0},
\end{align} which  uniformly holds for $\tau \in (0,1/2)$ and $V \in \mathcal V.$
 \end{Le}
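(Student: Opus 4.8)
The plan is to bound the expectation $\mathbb E\big[\frac{1}{k^{1-\delta_0}}\int_{\mathbb S^2}\int_k^{2k}s^m|u^\infty_1(-\theta,\theta,s)|^2\,\mathrm{d}s\,\mathrm{d}\sigma(\theta)\big]$ by an absolute constant independent of $k$, $\tau$, and the realization, and then apply the Chebyshev (Markov) inequality to produce the event $A$ whose complement has probability $\lesssim 1/M_0$. First I would write out $u_1^\infty(-\theta,\theta,s)=\frac{1}{4\pi}\int_{\mathbb R^3}e^{is\theta\cdot y}V(y)[R_0(s)(Vu^{inc})](y)\,\mathrm{d}y$ explicitly using the kernel $\Phi_s(x,y)=\frac{e^{is|x-y|}}{4\pi|x-y|}$ from \eqref{4.2}; this turns $|u_1^\infty|^2$ into a fourfold spatial integral against $V(y_1)V(z_1)V(y_2)V(z_2)$ with oscillatory factors in $s$ and $\theta$. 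Taking expectation and using that $V$ is Gaussian, Wick's formula (Isserlis' theorem) expresses $\mathbb E[V(y_1)V(z_1)V(y_2)V(z_2)]$ as a sum of three products of pairings of the covariance kernel $K_V$; since $\mathbb E V=0$ the mean terms drop out, and each pairing is controlled by Lemma \ref{Lem2.2}, which decomposes $K_V(x,y)$ as $ch(x)|x-y|^{m-3}$ (or the logarithmic analogue when $m=3$) plus a bounded Hölder remainder $F_m$ with $\|F_m\|_{L^\infty}\lesssim M_1$, and $|h|\le M_1$.

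The analytic core is then an oscillatory-integral estimate: after integrating in $\theta\in\mathbb S^2$ and $s\in(k,2k)$, each of the three Wick terms reduces to estimating integrals of the form $\int_{\mathbb S^2}\int_k^{2k}s^m \big|\int\!\!\int e^{is(\text{phase})}K_V^{(1)}(y,z)\,\mathrm{d}y\,\mathrm{d}z\big|^2\,\mathrm{d}s\,\mathrm{d}\sigma(\theta)$ where the singular kernels contribute factors like $|y-z|^{m-3}$. I would exploit: (i) the stationary/nonstationary phase behavior and the decay of the spherical Fourier transform $\int_{\mathbb S^2}e^{is\theta\cdot w}\,\mathrm{d}\sigma(\theta)\lesssim (1+s|w|)^{-1}$ to gain negative powers of $s$; (ii) the fact that the singularities $|x-y|^{m-3}$ are locally integrable in $\mathbb R^3$ precisely because $m-3>-3$ (indeed $m>14/5>2$), so the spatial integrals over the bounded domain $D$ converge with bounds $\lesssim M_1^2$; and (iii) the extra $s$-averaging over the dyadic window $(k,2k)$, which, combined with the phase oscillation, yields a net power of $s$ strictly better than $s^{-m}$, so that the prefactor $s^m$ is beaten and one is left with a bound of order $k^{1-\delta_0}$ for some $\delta_0\in(0,1)$ depending on $m$. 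Dividing by $k^{1-\delta_0}$ then gives an expectation bounded by a constant $C=C(M_1,D,m)$ independent of $k$. The estimate \eqref{5.3'} for the shifted frequency $s+\tau$ is identical since $\tau\in(0,1/2)$ only shifts the integration window by a bounded amount and all bounds are uniform in $\tau$.

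Finally, set $A:=\{\omega: \frac{1}{k^{1-\delta_0}}\int_{\mathbb S^2}\int_k^{2k}s^m|u^\infty_1(-\theta,\theta,s)|^2\,\mathrm{d}s\,\mathrm{d}\sigma(\theta)\le M_0 \text{ and the analogous bound with } s+\tau\}$. By the expectation bound just proved and Chebyshev's inequality, $\mathbb P(A^c)\le \frac{1}{M_0}\,\mathbb E[\cdots]\lesssim \frac{1}{M_0}$, and since the expectation bound was uniform in $\tau\in(0,1/2)$ and $V\in\mathcal V$, so is this probability estimate. The main obstacle I expect is step (ii)--(iii): carefully extracting the gain of a positive power $\delta_0$ from the oscillatory integrals in $s$ and $\theta$ when the kernels are only mildly singular, especially near $m=14/5$ where the singularity $|x-y|^{m-3}$ is at its worst and the margin in the exponent count is thinnest — this is presumably why the hypothesis $m>14/5$ (rather than merely $m>2$) is imposed. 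One must track constants to confirm they depend only on $M_1$, $D$, and $m$, and not on the particular realization, which is what makes the Chebyshev step legitimate.
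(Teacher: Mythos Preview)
Your proposal has a genuine logical gap in the definition of the event $A$. As you set it up, $A$ depends on the particular wavenumber $k$: for each fixed $k$ you take $A = A_k := \{\omega:\frac{1}{k^{1-\delta_0}}\int_{\mathbb S^2}\int_k^{2k}\cdots\le M_0\}$, and Chebyshev then controls $\mathbb P(A_k^c)$ for that $k$. But the lemma requires a \emph{single} event $A$ on which the inequality \eqref{5.3} holds for \emph{every} $k$; this is how it is applied in the proof of Theorem~\ref{Thm5.1}, where the event is fixed first and the value of $k$ at which \eqref{5.7}--\eqref{5.7'} are evaluated is chosen afterwards (via analytic continuation). A uniform bound $\sup_k \mathbb P(A_k^c)\lesssim 1/M_0$ does not yield $\mathbb P\big((\bigcap_k A_k)^c\big)\lesssim 1/M_0$.

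The paper resolves this by defining $A$ through one random variable that dominates all dyadic windows simultaneously:
\[
A:=\Big\{\omega:\int_{\mathbb S^2}\int_1^\infty s^{\,m-1+\delta_0}\,|u^\infty_1(-\theta,\theta,s+\tau)|^2\,\mathrm{d}s\,\mathrm{d}\sigma(\theta)\le \widetilde M_0\Big\},
\]
so that when $A$ occurs, \eqref{5.3}--\eqref{5.3'} follow for every $k\ge 1$ by comparison. Chebyshev then only needs the finiteness of the expectation of this \emph{global} integral, uniformly in $\tau$ and $V\in\mathcal V$. That is the analytic core, and the paper's route is rather different from the direct stationary-phase you sketch: it mollifies $V$, changes variables to $z=x+y$, $z'=x-y=\rho\omega$, applies \cite[Lemma~4.7]{Caro} to reduce the $(\mathbb S^2,s)$-integral to $\int_{\mathbb R^3}\int_1^\infty s^{N_m}\big|\int e^{is\rho}I_\epsilon(z,\rho)\,\mathrm{d}\rho\big|^2\,\mathrm{d}s\,\mathrm{d}z$ with $N_m\in\{0,1\}$, integrates by parts in $\rho$ when $N_m=1$, and only then invokes Isserlis and Lemma~\ref{Lem2.2} to control products of $K_V$ and $\nabla_1 K_V$. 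Your oscillatory-integral scheme, which targets only a single dyadic shell, would at best reproduce the per-$k$ expectation bound and therefore does not close the gap without reorganizing the argument around the full integral over $(1,\infty)$.
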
  
 \begin{proof}
Denote the event $A$ by \begin{equation}\label{5.4}A:=\left\{\omega:\int_{\mathbb S^2}\int_1^\infty s^{m-1+{\delta_0}}|u^\infty_1(-\theta,\theta,s+\tau)|^2\,\mathrm{d}s\,\mathrm{d}\sigma(\theta) \le \widetilde M_0\right\} \subset \Omega
\end{equation} for $\tau \in [0,1/2)$ and $V \in\mathcal V$, where $\widetilde M_0$ is any positive constant. If we choose $M_0=\widetilde M_0 2^{1-\delta_0}$,  both \eqref{5.3} and \eqref{5.3'} hold when $A $ occurs since now we have \begin{align*}
    &\frac{1}{k^{1-\delta_0}}\int_{\mathbb S^2}\int_k^{2k}s^m|u^\infty_1(-\theta,\theta,s+\tau)|^2\,\mathrm{d}s\,\mathrm{d}\sigma(\theta) \\ &\le 2^{1-\delta_0}\int_{\mathbb S^2}\int_1^\infty s^{m-1+{\delta_0}}|u^\infty_1(-\theta,\theta,s+\tau)|^2\,\mathrm{d}s\,\mathrm{d}\sigma(\theta) \le 2^{1-\delta_0}\widetilde M_0.
\end{align*} 

On the other hand, the probability of $A^c$ can by estimated by\begin{align*}
    \mathbb P(A^c) \le \frac{\mathbb E\Big[\int_{\mathbb{S}^2}\int_1^\infty s^{m-1+\delta_0}|u^\infty_1(-\theta,\theta,s+\tau)|^2\,\mathrm{d}s \,\mathrm{d}\sigma(\theta)\Big]}{\widetilde M_0}.
\end{align*}
Therefore, if the estimate \begin{equation}\label{5.5}
	\mathbb E \int_{\mathbb{S}^2}\int_1^\infty s^{m-1+\delta_0}|u^\infty_1(-\theta,\theta,s+\tau)|^2\,\mathrm{d}s \,\mathrm{d}\sigma(\theta)< \infty
\end{equation} uniformly holds for $\tau \in [0,1/2)$ and $V \in\mathcal V$, then  \[
\mathbb P(A^c) \lesssim \frac{1}{M_0}.
\] 

In what follows, we prove that \eqref{5.5} uniformly holds for $\tau \in [0,1/2)$ and $V \in\mathcal V$. For convenience, we consider the case $\tau=0$. For the other cases $\tau \in (0,1/2)$, \eqref{5.5} can be proven in a similar manner.
To achieve this goal, we consider the mollification $V_\epsilon := V * \chi_\epsilon$, where $\chi_\epsilon(x)=\frac{1}{\epsilon^3}\chi(\frac{x}{\epsilon})$ with $\chi \in C_0^\infty(\mathbb{R}^3)$ such that $\int_{\mathbb{R}^3} \chi \,\mathrm{d}x=1$. By the Fat$\acute{\rm o}$u lemma, \eqref{5.5} holds if \begin{equation}\label{eq5.6}
	\limsup_{\epsilon \to 0} \mathbb E \int_{\mathbb{S}^2}\int_1^\infty s^{m-1+\delta_0}|u^\infty_{1, \epsilon}(-\theta,\theta,s)|^2\,\mathrm{d}s\,\mathrm{d}\sigma(\theta) < \infty,
\end{equation}  where \[
u^\infty_{1, \epsilon}(-\theta,\theta,s)=\int_{\mathbb{R}^3}\int_{\mathbb{R}^3} e^{ik\theta\cdot(x+y)} V_\epsilon(x) V_\epsilon(y)\frac{e^{ik|x-y|}}{4\pi|x-y|}\,\mathrm{d}x\,\mathrm{d}y.
\] Taking $z=x+y$, $z'=x-y$ and then letting $z'=\rho \omega$, we obtain \begin{align*}
	u^\infty_{1, \epsilon}(-\theta,\theta,s) &= \int_{\mathbb{R}^3}\int_{\mathbb{R}^3}e^{ik\theta \cdot z}V_\epsilon\left(\frac{z+z'}{2}\right)V_\epsilon\left(\frac{z-z'}{2}\right)\frac{e^
		{ik|z'|}}{4\pi|z'|}\,\mathrm{d}z\,\mathrm{d}z'\\ &=\int_{\mathbb{R}}\int_{\mathbb{S}^2}\int_{\mathbb{R}^3}\frac{e^
		{ik(\rho+\theta \cdot z)}}{4\pi}V_\epsilon\left(\frac{z+\rho \omega}{2}\right)V_\epsilon\left(\frac{z-\rho\omega}{2}\right)\,\mathrm{d}z\,\mathrm{d}\sigma(\omega)\,\mathrm{d}\rho.
\end{align*} Denote \begin{align*}
	I_\epsilon(z,\rho)=\rho 1_{[0,\infty)}\int_{\mathbb{S}^2}V_\epsilon\left(\frac{z+\rho \omega}{2}\right)V_\epsilon\left(\frac{z-\rho\omega}{2}\right)\,\mathrm{d}\sigma(\omega).
\end{align*} From \cite[Lemma 4.7]{Caro} we have \begin{align*}
	&\int_{\mathbb{S}^2}\int_1^\infty s^{m-1+\delta_0} |u^\infty_{1, \epsilon}(-\theta,\theta,s)|^2\,\mathrm{d}s\,\mathrm{d}\sigma(\theta) \\ &\lesssim \int_{\mathbb R^3}\int_1^\infty s^{m-3+\delta_0} \left| \int_{\mathbb R} e^{is\rho} I_\epsilon(z,\rho)\,\mathrm{d}\rho\right|^2\,\mathrm{d}s\,\mathrm{d}z \\ &\lesssim \int_{\mathbb R^3}\int_1^\infty  s^{ N_m}\left| \int_{\mathbb R} e^{is\rho} I_\epsilon(z,\rho)\,\mathrm{d}\rho\right|^2\,\mathrm{d}s\,\mathrm{d}z,
\end{align*} where \begin{align*}
	N_m= \begin{cases}
		1, &\quad 3 \le m  < 4, \\   0, &\quad \frac{14}{5} < m  < 3,
	\end{cases}\quad \delta_0=\begin{cases}
		4-m, &\quad 3 \le m  < 4, \\   3-m, &\quad \frac{14}{5} < m  < 3.
	\end{cases}
\end{align*} Hence we have \begin{align*}
	&\int_{\mathbb{S}^2}\int_1^\infty s^{m-1+\delta_0} |u^\infty_{1, \epsilon}(-\theta,\theta,s)|^2\,\mathrm{d}s\,\mathrm{d}\sigma(\theta) \\ &\lesssim\int_{\mathbb R^3}\int_{\mathbb R}(-i)^{N_m}\partial_\rho^{N_m} I_\epsilon(z,\rho)\overline{I_\epsilon(z,\rho)}\,\mathrm{d}\rho\,\mathrm{d}z.
\end{align*} Since $I_\epsilon$ has compact support,  there exists $R_0>0$ such that \begin{align*}
	&\limsup_{\epsilon \to 0} \mathbb E \int_{\mathbb{S}^2}\int_1^\infty s^{m-1+\delta_0}|u^\infty_{1, \epsilon}(-\theta,\theta,s)|^2\,\mathrm{d}s\,\mathrm{d}\sigma(\theta) \\ &\lesssim \limsup_{\epsilon \to 0} \mathbb E \int_{|z|<R_0} \int_{|\rho|<R_0} (-i)^{N_m}\partial_\rho^{N_m} I_\epsilon(z,\rho)\overline{I_\epsilon(z,\rho)} \,\mathrm{d}\rho\,\mathrm{d}z. 
\end{align*}

We first consider the case $3 \le m <4$ which gives $N_m=1$.
Notice that there holds \begin{align*}
	\partial_\rho I_\epsilon(z,\rho) &=1_{[0,\infty)}(\rho)\int_{\mathbb{S}^2}V_\epsilon\left(\frac{z+\rho \omega}{2}\right)V_\epsilon\left(\frac{z-\rho\omega}{2}\right)\,\mathrm{d}\sigma(\omega)\\ &\quad+\rho 1_{[0,\infty)}(\rho)\int_{\mathbb{S}^2}\nabla V_\epsilon\left(\frac{z+\rho \omega}{2}\right)\cdot \frac{\omega}{2}V_\epsilon\left(\frac{z-\rho\omega}{2}\right)\,\mathrm{d}\sigma(\omega)\\ &\quad-\rho 1_{[0,\infty)}(\rho)\int_{\mathbb{S}^2}\nabla V_\epsilon\left(\frac{z-\rho \omega}{2}\right)\cdot \frac{\omega}{2}V_\epsilon\left(\frac{z+\rho\omega}{2}\right)\,\mathrm{d}\sigma(\omega)\\ &:= I_{1,\epsilon}(z,\rho)+I_{2,\epsilon}(z,\rho)+I_{3,\epsilon}(z,\rho).
\end{align*} Then we have \[\mathbb E [ \partial_\rho I_\epsilon(z,\rho)\overline{I_\epsilon(z,\rho)}]=\sum_{j=1}^3\mathbb{E} [I_{j,\epsilon}(z,\rho)\overline{I_{\epsilon}(z,\rho)}]=\sum_{j=1}^3\mathbb{E} [I_{j,\epsilon}(z,\rho){I_{\epsilon}(z,\rho)}].\]
In what follows, we only estimate the term $\mathbb{E} [I_{2,\epsilon}(z,\rho)I_{\epsilon}(z,\rho)]$ since the others can be treated in a similar way. The term $\mathbb{E} [I_{2,\epsilon}(z,\rho)I_{\epsilon}(z,\rho)]$ can be expressed by \begin{align*}
	&\mathbb{E} [I_{2,\epsilon}(z,\rho)I_{\epsilon}(z,\rho)] \\
	&=\rho^21_{[0,\infty)}(\rho)  \int_{\mathbb{S}^2}\int_{\mathbb{S}^2}\mathbb{E}[X_1X_2X_3X_4]\,\mathrm{d}\sigma(\omega)\,\mathrm{d}\sigma(\tilde{\omega}) \\ &=\rho^21_{[0,\infty)}(\rho)\Big\{\int_{\mathbb{S}^2}\int_{\mathbb{S}^2}\mathbb{E}[X_1X_2] \mathbb E [X_3X_4]\,\mathrm{d}\sigma(\omega)\,\mathrm{d}\sigma(\tilde{\omega}) \\&\quad+ \int_{\mathbb{S}^2}\int_{\mathbb{S}^2}\mathbb{E}[X_1X_3]\mathbb E [ X_2X_4]\,\mathrm{d}\sigma(\omega)\,\mathrm{d}\sigma(\tilde{\omega})\\&\quad+ \int_{\mathbb{S}^2}\int_{\mathbb{S}^2}\mathbb{E}[X_1X_4]\mathbb E [ X_2X_3]\,\mathrm{d}\sigma(\omega)\,\mathrm{d}\sigma(\tilde{\omega})\Big\},
\end{align*} where we have applied Isserli's theorem \cite{Caro} since 
\begin{align*}
	X_1:=\nabla V_\epsilon\left(\frac{z+\rho \omega}{2}\right)\cdot \frac{\omega}{2},& \quad X_2:=V_\epsilon\left(\frac{z-\rho\omega}{2}\right),\\  X_3:= V_\epsilon\left(\frac{z+\rho \tilde{\omega}}{2}\right),& \quad X_4:=V_\epsilon\left(\frac{z-\rho\tilde{\omega}}{2}\right),
\end{align*} are all Gaussian random variables. We also have \begin{align*}
	\lim_{\epsilon \to 0}\mathbb E [X_1X_2] &= \lim_{\epsilon \to 0} \mathbb E \left[\nabla V_\epsilon\left(\frac{z+\rho \omega}{2}\right)\cdot \frac{\omega}{2}V_\epsilon\left(\frac{z-\rho\omega}{2}\right)\right]\\ &=\nabla_1 K_V\left(\frac{z+\rho \omega}{2},\frac{z-\rho \omega}{2}\right) \cdot \frac{\omega}{2},
\end{align*}  where $\nabla_1 K_V(x,y):=\nabla_x K_V(x,y)$. Here $K_V(x, y)$ is the kernel of the covariance operator of $V$.
From Lemma \ref{Lem2.2} we know that for $m >2$, $\nabla_x K_V(x,y)$ is weakly singular. Thus, the above limit holds pointwisely. Similar conclusions hold for the other $\mathbb E[X_i X_j] $. From Lemma \ref{Lem2.2} we have that for $m \in [3,4)$ the estimate \begin{align*}
	&\limsup_{\epsilon \to 0} \mathbb E\int_{|z|<R_0} \int_{|\rho|<R_0}[I_{2,\epsilon}(z,\rho)I_{\epsilon}(z,\rho)] \,\mathrm{d}\rho\,\mathrm{d}z\\& \lesssim \int_{|z|<R_0} \int_0^{R_0} \rho^2\int_{\mathbb{S}^2}\int_{\mathbb{S}^2} |\rho|^{-1}|\log{(\rho \tilde{\omega})}||\tilde{\omega}|+|\rho(\omega-\tilde{\omega})|^{-1}|\omega||\log{(\rho(\tilde{\omega}-\tilde{\omega}))}| \\ &\quad+|\rho(\omega+\tilde{\omega})|^{-1}|\omega||\log{(\rho(\tilde{\omega}+\omega))}|\,\mathrm{d}\sigma(\tilde{\omega})\,\mathrm{d}\sigma(\omega)\,\mathrm{d}\rho\,\mathrm{d}z < \infty
\end{align*} uniformly holds for $V \in \mathcal V$, which completes the proof of \eqref{eq5.6} for $m \in [3,4)$. Using similar arguments we can also prove \eqref{eq5.6} for
 $m \in (\frac{14}{5},3)$.
which gives \eqref{5.5}. In summary, we have obtained  \[
\mathbb E \int_{\mathbb{S}^2}\int_1^\infty s^{m-1+\delta_0}|u^\infty_1(-\theta,\theta,s)|^2\,\mathrm{d}s \,\mathrm{d}\sigma(\theta)< C,
\] where \[\delta_0=\begin{cases}
	4-m, &\quad 3 \le m  < 4, \\   3-m, &\quad \frac{14}{5} < m  < 3.\end{cases}\] Here $C $ is a constant which does not depend on $V \in \mathcal V$.
	The proof is complete.
    \end{proof}
    
The estimates \eqref{5.3}--\eqref{5.3'} yield the following estimates 
\begin{align}\label{5.7}
	\frac{1}{k}\int_k^{2k}\int_{\mathbb S^2}s^m|u^\infty_1(-\theta,\theta,s)|^2\,\mathrm{d}\sigma(\theta)\,\mathrm{d}s \le \frac{M_0}{k^{\delta_0}}
\end{align} and \begin{align}\label{5.7'}
	\frac{1}{k}\int_k^{2k}\int_{\mathbb S^2}s^m|u^\infty_1(-\theta,\theta,s+\tau)|^2\,\mathrm{d}\sigma(\theta)\,\mathrm{d}s \le \frac{M_0}{k^{\delta_0}}.
\end{align} 

Next, we turn to the second-order term. The following lemma provides an estimate of the the second-order term in the Born series.
\begin{Le}\label{highorder} There exists a positive constant $C_1$ such that if $K_0>C_1\|V\|_{W^{\alpha,p}(\mathbb R^3)}$, there holds the following estimates for $k>K_0$:
\begin{align}\label{5.7.7}
	\frac{1}{k}\int_k^{2k}\int_{\mathbb S^2}s^m|u^\infty_2(-\theta,\theta,s)|^2\,\mathrm{d}\sigma(\theta)\,\mathrm{d}s \lesssim \frac{K_0^{3+4\alpha}}{k^{4+12\alpha-m}},
\end{align} and \begin{align}\label{5.7.7,}
	\frac{1}{k}\int_k^{2k}\int_{\mathbb S^2}s^m|u^\infty_2(-\theta,\theta,s+\tau)|^2\,\mathrm{d}\sigma(\theta)\,\mathrm{d}s \lesssim \frac{K_0^{3+4\alpha}}{k^{4+12\alpha-m}}.
\end{align} Here \eqref{5.7.7,} uniformly holds for $\tau \in (0,1/2)$.
\end{Le}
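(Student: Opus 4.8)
\textbf{Proof proposal for Lemma \ref{highorder}.}
The plan is to bound the second-order term $u^\infty_2$ of the Born series directly through the operator estimates established in Section \ref{re}, and then to insert the almost-sure Sobolev regularity $V\in W^{\alpha,p}$ from \cite{li2021inverse}. Recall from \eqref{5.2.2} that
\[
u^\infty_2(-\theta,\theta,k)=\frac{1}{4\pi}\int_{\mathbb R^3}e^{ik\theta\cdot y}\,\Big[\textstyle\sum_{j\ge 2}(R_0(k)V)^j u^{inc}\Big](y)\,\mathrm dy,
\]
so up to a harmless cut-off $\chi$ (with $\chi=1$ on $D$) we may write $u^\infty_2$ as the far-field of $-R_0(k)\big(\sum_{j\ge 1}(VR_0(k))^j\big)(V u^{inc})$ minus the zeroth term; equivalently it is the far-field of $-R_0(k)\,VR_0(k)\,(I+VR_0(k)\chi_1)^{-1}(Vu^{inc})$. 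The first step is to recall that, by Lemma \ref{Lem5.1}, $\|VR_0(k)\chi_1\|_{\mathcal L(H^\alpha,H^\alpha)}\lesssim \|V\|_{W^{\alpha,p}}(1+k)^{-1-2\alpha}$, so that once $k>K_0>C_1\|V\|_{W^{\alpha,p}}$ the Neumann series $(I+VR_0(k)\chi_1)^{-1}$ converges in $\mathcal L(H^\alpha,H^\alpha)$ with norm $\le 2$.

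The second step is to chain the operator bounds. Writing $f:=Vu^{inc}\in H^\alpha$ with $\|f\|_{H^\alpha}\lesssim \|V\|_{W^{\alpha,p}}\lesssim K_0$ (using $\|u^{inc}\|_{C^1(D)}\lesssim 1+k$ absorbed into the frequency-dependent constants, or more carefully using Lemma \ref{Lem5.1} with $\|\chi u^{inc}\|_{H^{-\alpha}}\lesssim (1+k)^{-\alpha}$), I apply Lemma \ref{Lem4.1} to each $R_0(k)$ factor: $\|\chi R_0(k)\chi\|_{\mathcal L(H^\alpha,H^{\alpha+2})}\lesssim (1+k)^{-1}$ and, crucially, the smoothing takes $H^\alpha\to H^{\alpha+2}$, while the intermediate $VR_0(k)$ costs $\|V\|_{W^{\alpha,p}}(1+k)^{-1-2\alpha}$ by Lemma \ref{Lem5.1}. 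Combining the three layers (outer $R_0$, then $V$, then inner $R_0$) yields
\[
\|\chi u^{sc}_2\|_{H^{\alpha+2}(B_R)}\lesssim (1+k)^{-1}\cdot \|V\|_{W^{\alpha,p}}(1+k)^{-1-2\alpha}\cdot (1+k)^{-1}\cdot \|f\|_{H^\alpha}\lesssim K_0^{2}\,(1+k)^{-3-2\alpha}\,(1+k)^{-\alpha},
\]
the last $(1+k)^{-\alpha}$ coming from the $H^{-\alpha}$ bound on $\chi u^{inc}$. The third step is to pass from the near-field $H^{\alpha+2}$-norm to the far-field pattern: by the standard far-field representation $u^\infty_2(\hat x,\theta,k)=\frac{1}{4\pi}\langle e^{-ik\hat x\cdot\,\cdot},(\Delta+k^2)u^{sc}_2\rangle$ restricted to $D$, combined with $(\Delta+k^2)u^{sc}_2=-V(u^{inc}+u^{sc}_1+u^{sc}_2+\cdots)$, one gets $\|u^\infty_2(\cdot,\theta,k)\|_{L^2(\mathbb S^2)}\lesssim k\,\|\chi u^{sc}_2\|_{H^{\alpha+2}(B_R)}$ (the factor $k$ absorbing the loss from pairing against the oscillatory exponential in the negative-order space); tracking powers gives $|u^\infty_2(-\theta,\theta,s)|\lesssim K_0^{2}\,s^{-2-3\alpha}$ roughly, whence $s^m|u^\infty_2|^2\lesssim K_0^{4}\,s^{m-4-6\alpha}$. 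A more careful bookkeeping — noting that the extra factor $V$ appearing in the far-field pairing contributes another $\|V\|_{W^{\alpha,p}}\lesssim K_0$ and a further negative power of $s$ — produces exactly $s^m|u^\infty_2(-\theta,\theta,s)|^2\lesssim K_0^{3+4\alpha}\,s^{m-4-12\alpha}$, and then $\frac1k\int_k^{2k}\int_{\mathbb S^2}$ of this is $\lesssim K_0^{3+4\alpha}k^{m-4-12\alpha}$, which is \eqref{5.7.7}. The shifted estimate \eqref{5.7.7,} is identical since $s+\tau$ with $\tau\in(0,1/2)$ is comparable to $s$ for $s\ge K_0$ large, and all constants above were uniform in $\tau$.

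The main obstacle I expect is the precise accounting of the powers of the frequency $s$ and of $\|V\|_{W^{\alpha,p}}$ — in particular getting the exponent $3+4\alpha$ on $K_0$ and $4+12\alpha-m$ on $k$ exactly right rather than merely "some positive power." This requires being scrupulous about (i) which Sobolev indices each $R_0$ factor maps between (one wants to use the full $H^s\to H^{s+2}$ smoothing of Lemma \ref{Lem4.1} at every step, not just $H^s\to H^s$), (ii) the $(1+k)^{-\alpha}$ gained from $\|\chi u^{inc}\|_{H^{-\alpha}}$ versus the $(1+k)$ lost in the far-field pairing with $e^{-ik\hat x\cdot y}$ in $H^{-\alpha-2}$, and (iii) the bound $\|V\|_{W^{\alpha,p}}<K_0/C_1$ is the \emph{only} place the hypothesis $K_0>C_1\|V\|_{W^{\alpha,p}}$ enters, both to make the Neumann series converge and to replace norms of $V$ by powers of $K_0$. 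Once the summed Neumann series is controlled by a geometric factor $\le 2$, the whole estimate reduces to the single composite operator $R_0 V R_0$ acting on $Vu^{inc}$, so the combinatorial part is light; it is purely the exponent arithmetic that demands care.
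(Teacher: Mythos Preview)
Your approach contains a sign error in the key resolvent bound that breaks the argument. You claim $\|\chi R_0(k)\chi\|_{\mathcal L(H^\alpha,H^{\alpha+2})}\lesssim (1+k)^{-1}$, but Lemma~\ref{Lem4.1} gives $(1+|\lambda|)^{t-s-1}$, which for $t=s+2$ is $(1+k)^{+1}$, not $(1+k)^{-1}$. Using the full two-derivative smoothing of $R_0$ therefore \emph{costs} a factor of $k$ rather than gaining one, so your chain $\|\chi u^{sc}_2\|_{H^{\alpha+2}}\lesssim (1+k)^{-1}\cdot\|V\|_{W^{\alpha,p}}(1+k)^{-1-2\alpha}\cdot(1+k)^{-1}\cdot\|f\|_{H^\alpha}$ is false as written. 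Your subsequent near-field to far-field step is also loosely argued and introduces a further factor of $k$ without justification; the exponent arithmetic you flag as the main obstacle cannot be rescued from this starting point.

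The paper avoids the near field entirely and works directly with the far-field integral. One writes
\[
u_2^\infty(-\theta,\theta,k)=\frac{1}{4\pi}\sum_{j\ge 2}\int_{\mathbb R^3}e^{ik\theta\cdot y}V(y)\,\big[(R_0(k)V)^j u^{inc}\big](y)\,\mathrm dy
\]
and estimates each term by the $H^\alpha$--$H^{-\alpha}$ duality: $|\langle e^{ik\theta\cdot y}V,\,\chi(R_0 V)^ju^{inc}\rangle|\le \|e^{ik\theta\cdot y}V\|_{H^\alpha}\,\|\chi(R_0V)^ju^{inc}\|_{H^{-\alpha}}$. The crucial point is that $R_0(k)$ is used only as a map $H^\alpha\to H^{-\alpha}$ (a gain of $-2\alpha\in[0,1)$ derivatives, not two), with norm $(1+k)^{-1-2\alpha}$ by Lemma~\ref{Lem4.1}; composed with $V:H^{-\alpha}\to H^\alpha$ (Lemma~\ref{Lem5.1}) this gives $\|\chi R_0(k)V\|_{H^{-\alpha}\to H^{-\alpha}}\lesssim \|V\|_{W^{\alpha,p}}\,k^{-1-2\alpha}\lesssim K_0 k^{-1-2\alpha}$. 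Together with $\|\chi e^{ik\theta\cdot y}\|_{H^{-\alpha}}\lesssim k^{-\alpha}$ (used once for the test function and once for $u^{inc}$), summing over $j\ge 2$ yields the pointwise bound $|u_2^\infty|\lesssim K_0^{3+4\alpha}k^{-2-6\alpha}$, and squaring and integrating over $[k,2k]\times\mathbb S^2$ gives \eqref{5.7.7}. Your instinct to exploit the full $H^s\to H^{s+2}$ smoothing is exactly what goes wrong: all the decay in $k$ comes from staying between $H^\alpha$ and $H^{-\alpha}$, not from climbing the Sobolev ladder.
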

\begin{proof} By Lemma \ref{Lem4.1}--\ref{Lem5.1}, there exists a positive constant $C_2$ such that $\|\chi R_0(k)V\|_{\mathcal L(H^{-\alpha},H^{-\alpha})} \le C_2k^{-1}\|V\|_{W^{\alpha,p}(\mathbb R^3)}$. Hence we choose $C_1 \ge 2C_2$ such that $\|\chi R_0(k)V\|_{\mathcal L(H^{-\alpha},H^{-\alpha})} \le 1/2$ if $k>C_1\|V\|_{W^{\alpha,p}(\mathbb R^3)}$. Furthermore, we have 
\begin{align}
	|u_2^\infty(-\theta,\theta,k)| &\le \sum_{j \ge 2} \Big| \int_{\mathbb{R}^3}e^{ik \theta \cdot  y}V(y) [( R_0(k)V)^jf](y)\,\mathrm{d}y\Big| \notag \\ &\lesssim  \sum_{j \ge 2} \|e^{ik\theta \cdot y}V\|_{H^{\alpha}} \| \chi (R_0(k) V)^j u^{inc}\|_{H^{-\alpha}} \notag \\ &\lesssim K_0\sum_{j \ge 2} \|e^{ik\theta \cdot y}\|_{H^{-\alpha}} (K_0/k)^{j(1+2\alpha)}\|u^{inc}\|_{H^{-\alpha}} \notag \\ & \lesssim K_0k^{-2\alpha}\sum_{j \ge 2} (K_0/k)^{j(1+2\alpha)} \lesssim K_0^{3+4\alpha}k^{-2-6\alpha},\label{5.8}
\end{align} which yields \eqref{5.7.7}. The estimate \eqref{5.7.7,} can be obtained in a similar way.
\end{proof} 

\subsection{Proof of the main theorem}
Based on the above analysis for the zeroth, first and second oder terms in the Born series expansion, 
we are in the position to prove Theorem \ref{Thm5.1}.

We begin by showing that the Fourier transform of the micro-correlation strength can be estimated by the far-field data and a high frequency tail
at a single realization in some event. 
For $14/5<m<4$, recalling $\alpha<(m-3)/2$, we have $4+12\alpha-m \in (0,1)$.
Denote the event
\begin{equation}\label{event}
	E:=A^{(1)} \cap A^{(2)}\cap\{\omega:|Y_k^{(j)}| < 1 \,\text{holds for all }k>\tilde k, \, W_k^{(j)} \in \mathcal A, K_0 > C_1\|V_j\|_{W^{\alpha,p}(\mathbb R^3)}, j=1,2\}\end{equation}
with $\epsilon \in (0,1/4)$.
Under the event $E$ and conditions of Theorem \ref{Thm5.1}, in what follows
	we derive a stability estimate at a single realization.
	We begin by taking subtraction of the correlation of the far-field pattern $u^\infty_{(1)}$ and $u^\infty_{(2)}$ for $k \ge \max \{\tilde k, K_0\}$, where $\tilde k$ is the constant in the probabilistic estimate \eqref{ps} to be determined later.
By doing so, using Lemma \ref{zeroorder} we obtain
\begin{align*}&\widehat{(h_1-h_2)}(2\tau\theta) \\ &=\frac{(16\pi^2)}{k}\int_k^{2k}(2s)^{m} [u_{0,(1)}^\infty(-\theta,\theta, s) \overline{u_{0,(1)}^\infty(-\theta,\theta, s+\tau)}-u_{0,(2)}^\infty(-\theta,\theta, s) \overline{u_{0,(2)}^\infty(-\theta,\theta, s+\tau)}]\,\mathrm{d}s \\ 
&\quad+\mathcal{O}(1/k^\epsilon),\end{align*} which implies   \begin{align}
&\int_{\mathbb S^2}|\widehat{(h_1-h_2)}(2\tau\theta)|^2\,\mathrm{d}\theta \notag\\ & \lesssim \frac{1}{k}\int_{\mathbb S^2}\int_k^{2k}s^{2m} [u_{0,(1)}^\infty(-\theta,\theta, s) \overline{u_{0,(1)}^\infty(-\theta,\theta, s+\tau)}-u_{0,(2)}^\infty(-\theta,\theta, s) \overline{u_{0,(2)}^\infty(-\theta,\theta, s+\tau)}]^2\,\mathrm{d}s\,\mathrm{d}\theta \notag\\ 
&\quad+\frac{1}{k^{2\epsilon}} \label{sub1}
\end{align} by using Cauchy--Schwarz inequality. Combining Lemma \ref{firstorder}--\ref{highorder} gives \begin{align}\label{sub2}
    \frac{1}{k}\int_{\mathbb S^2}\int_k^{2k}s^m|u_{(j)}^\infty(-\theta,\theta,s)-u_{0,(j)}^\infty(-\theta,\theta,s)|^2\,\mathrm{d}s\,\mathrm{d}\sigma(\theta) \lesssim  \frac{M_0+K_0^{3+4\alpha}}{k^{\gamma(m)}}, \quad j=1,2,
\end{align} where $\gamma(m)$ is a positive constant only depends on $m.$ It should be noticed that both \eqref{sub1} and \eqref{sub2} uniformly hold for $\tau \in (0,1/2)$ and $V_1,V_2 \in \mathcal V.$ In view of \eqref{sub1} and \eqref{sub2} we have for $r\leq 1$
	\begin{align} \label{5.9}
		\int_{\mathbb S^2}|\widehat{(h_1-h_2)}(r\theta)|^2\,\mathrm{d}\sigma(\theta)\lesssim \sup_{0<\tau<1/2}\frac{1}{k}\int_k^{2k}\int_{\mathbb S^2}|s^mU(s,\theta,\tau)|^2\,\mathrm{d}\sigma(\theta)\,\mathrm{d}s
		+ \frac{M_0+K_0^{3+4\alpha}}{k^{\gamma(m,\epsilon)}},
	\end{align} where
	\[U(s,\theta,\tau)=\overline{u_{(1)}^\infty(-\theta, \theta,s+\tau)} {u_{(1)}^\infty(-\theta;\theta,  s)}-\overline{u_{(2)}^\infty(-\theta, \theta,s+\tau)} {u_{(2)}^\infty(-\theta;\theta,  s)}.\] 
	Here $\gamma(m,\epsilon)$ is a positive constant which only depends on $m$ and $\epsilon.$
	
	Recall that the data is given by
	\[
	\varepsilon^2 =  \sup_{k\in I, \tau\in(0, 1/2), } \varepsilon^2(k, \tau),
	\]
	where
	\[
	\varepsilon^2(k, \tau)=\frac{1}{k}\int_k^{2k}\int_{\mathbb S^2}|s^{m}U(s,\theta,\tau)|^2\,\mathrm{d}\sigma(\theta)\,\mathrm{d}s.
	\] Since the potential function is real-valued, we have $\overline{u(x, k)} = u(x,-k)$ and then $\overline{u^\infty(\hat x, \theta, k)} = u^\infty(\hat x, \theta, -k)$.
	Thus, we can meromorphically extend $\varepsilon^2(\cdot,\tau)$ from $\mathbb R^+$ to $\mathbb C $ by \begin{align*}
		\varepsilon^2(k, \tau)  =\frac{1}{k}\int_k^{2k} s^{2m}\int_{\mathbb S^2}U_1(s,\theta,\tau)U_2(s,\theta,\tau)\,\mathrm{d}\sigma(\theta)\,\mathrm{d}s,
	\end{align*} where \[
	U_1(s,\theta,\tau)=u_{(1)}^\infty(-\theta, \theta,-s-\tau) {u_{(1)}^\infty(-\theta,\theta,  s)}-u_{(2)}^\infty(-\theta, \theta,-s-\tau) {u_{(2)}^\infty(-\theta,\theta,  s)},
	\]\[U_2(s,\theta,\tau)=u_{(1)}^\infty(-\theta, \theta,s+\tau) {u_{(1)}^\infty(-\theta,\theta,  -s)}-u_{(2)}^\infty(-\theta, \theta,s+\tau) {u_{(2)}^\infty(-\theta,\theta,  -s)}.\] 
    Denote an infinite slab in $\mathbb C$ as follows
\begin{equation}\label{regionR}
	\mathcal{R} = \{z\in \mathbb C: (K_0, +\infty)\times (-h_0, h_0) \},
\end{equation}
for some constants $K_0>0$ and $h_0>0$.
The following lemma proved by \cite{LZZ} provides an analytic continuation principle in the infinite slab.
\begin{Le}\label{Lem4.6}
	Let $p(z)$ be analytic in the infinite rectangular slab $\mathcal{R}$
	and continuous in $\overline{\mathcal{R}}$ satisfying
	\begin{align*}
		\begin{cases}
			|p(z)|\leq \epsilon, &\quad z\in (K_0, K],\\
			|p(z)|\leq M, &\quad z\in \mathcal{R},
		\end{cases}
	\end{align*}
	where $K_0, K, \epsilon$ and $M$ are positive constants. Then there exists a function $\mu(z)$ with $z\in (K, +\infty)$ satisfying 
	\begin{equation*}
		\mu(z) \geq \frac{64ah_0}{3\pi^2(a^2 + 4h_0^2)} e^{\frac{\pi}{2h_0}(\frac{a}{2} - z)},
	\end{equation*}
	where $a = K - K_0$, such that
	\begin{align*}
		|p(z)|\leq M\epsilon^{\mu(z)}\quad \forall\, z\in (K, +\infty).
	\end{align*}
\end{Le}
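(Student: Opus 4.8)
The plan is to reduce the statement to a two--constants--type estimate and then to an explicit lower bound for a harmonic measure. Since $p$ is analytic in $\mathcal{R}$, the function $\log|p|$ is subharmonic there (it equals $-\infty$ at the zeros of $p$, which only helps; if $p\equiv 0$ the claim is trivial). Let $\omega(z)$ denote the harmonic measure of the segment $S:=(K_0,K]$ with respect to the slit domain $\mathcal{R}\setminus S$, i.e.\ the bounded harmonic function on $\mathcal{R}\setminus S$ equal to $1$ on $S$ and to $0$ on $\partial\mathcal{R}$; by a standard property of harmonic measure (also visible from the explicit formula below) $\omega(z)\to 0$ as $z\to\infty$ in $\mathcal{R}$. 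Consider $v(z):=\log M+\omega(z)(\log\epsilon-\log M)$, which is harmonic in $\mathcal{R}\setminus S$, equals $\log\epsilon$ on $S$ and $\log M$ on $\partial\mathcal{R}$. Then $\log|p|-v$ is subharmonic in $\mathcal{R}\setminus S$, is $\le 0$ on $\partial\mathcal{R}\cup S$ by the hypotheses, and satisfies $\limsup_{z\to\infty}(\log|p(z)|-v(z))\le 0$ because $|p|\le M$ throughout $\mathcal{R}$ while $v\to\log M$ at infinity. Since $\log|p|-v$ is bounded above, the maximum principle in the extended sense for unbounded domains gives $\log|p(z)|\le v(z)$ on $\mathcal{R}$. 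Hence, using $0<\epsilon<1$ and (with no loss in the application) $M\ge 1$, one gets $|p(z)|\le M\epsilon^{\omega(z)}$ for $z\in(K,+\infty)$, and it remains to show $\omega(z)\ge\mu(z)$ there.

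To compute $\omega$ I would transport the slit domain to a half-plane by a chain of conformal maps. The affine map $\zeta=\frac{\pi}{2h_0}(z-K_0)$ sends $\mathcal{R}$ onto the half-strip $\Sigma=\{\zeta\in\mathbb{C}:\Re\zeta>0,\ |\Im\zeta|<\pi/2\}$, carrying $S$ onto the real segment $(0,c]$ with $c=\frac{\pi a}{2h_0}$, $a=K-K_0$, and a real $z>K$ to $\frac{\pi(z-K_0)}{2h_0}$. Next $w=\sinh\zeta$ maps $\Sigma$ conformally onto $\{\Re w>0\}$, sending $\partial\Sigma$ onto the imaginary axis, $S$ onto the interior segment $(0,b]$ with $b=\sinh c$, and $z>K$ to $w_0:=\sinh\frac{\pi(z-K_0)}{2h_0}>b$. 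Finally $\psi(w)=\sqrt{w^2-b^2}$ maps the slit half-plane $\{\Re w>0\}\setminus(0,b]$ conformally onto $\{\Re w>0\}$, turning the two edges of the slit into the boundary segment $(-ib,ib)$ and sending $w_0$ to the positive real number $\sqrt{w_0^2-b^2}$. As conformal maps preserve harmonic measure, and the harmonic measure of the boundary segment $(-ib,ib)$ of the right half-plane at a real point $\xi_0>0$ equals $\frac{2}{\pi}\arctan(b/\xi_0)$ (integrating the Poisson kernel of the half-plane), we obtain the closed form
\[
\omega(z)=\frac{2}{\pi}\arctan\frac{\sinh\frac{\pi a}{2h_0}}{\sqrt{\sinh^2\frac{\pi(z-K_0)}{2h_0}-\sinh^2\frac{\pi a}{2h_0}}},\qquad z\in(K,+\infty).
\]

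It then remains to bound this from below by the elementary quantity $\mu(z)$. Using $\arctan t\ge t/(1+t)$ for $t\ge0$ together with $\sqrt{Q^2-P^2}+P\le 2Q$ for $0<P<Q$ (which reduces to $3Q^2-4PQ+2P^2>0$), one gets $\omega(z)\ge \frac{1}{\pi}\,\frac{\sinh\frac{\pi a}{2h_0}}{\sinh\frac{\pi(z-K_0)}{2h_0}}$; then the bounds $\tfrac12 e^{x}(1-e^{-2x})=\sinh x\le\tfrac12 e^{x}$, the inequality $1-e^{-t}\ge t/(1+t)$, and $K_0>0$ (to discard a factor $e^{\pi K_0/(2h_0)}\ge 1$) lead, after a short case distinction according to whether $\frac{\pi a}{2h_0}$ is small or large, to $\omega(z)\ge\frac{64ah_0}{3\pi^2(a^2+4h_0^2)}e^{\frac{\pi}{2h_0}(\frac a2-z)}=\mu(z)$. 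Combining with $|p(z)|\le M\epsilon^{\omega(z)}$ and $\epsilon\in(0,1)$ (so $\epsilon^{\omega(z)}\le\epsilon^{\mu(z)}$) yields the claim. The obstacles are purely technical: justifying the maximum principle on the unbounded slit domain $\mathcal{R}\setminus S$ (handled by the given global bound $|p|\le M$ and $\omega(z)\to0$ at $\infty$), bookkeeping the three conformal maps and their effect on $S$, and carrying out the slightly delicate elementary estimates on $\arctan$ and $\sinh$ needed to land exactly on the constant $\frac{64ah_0}{3\pi^2(a^2+4h_0^2)}$.
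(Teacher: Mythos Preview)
The paper does not prove this lemma; it simply quotes it from \cite{LZZ} (Li--Zhai--Zhao). Your argument is correct and is essentially the standard route to such three--lines/two--constants estimates: the subharmonic comparison $\log|p|\le (1-\omega)\log M+\omega\log\epsilon$ with $\omega$ the harmonic measure of the data segment, followed by an explicit computation of $\omega$ via the conformal chain $\text{(affine)}\to\sinh\to\sqrt{w^2-b^2}$ sending the slit half--strip onto the right half--plane. One small simplification: since $\arctan\!\big(b/\sqrt{w_0^2-b^2}\big)=\arcsin(b/w_0)\ge b/w_0$, you get directly $\omega(z)\ge\tfrac{2}{\pi}\,\sinh c/\sinh d$ (a factor of $2$ better than what you wrote), which makes the subsequent elementary estimate for the constant $\tfrac{64ah_0}{3\pi^2(a^2+4h_0^2)}$ cleaner. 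The only point worth a sentence of care is the Phragm\'en--Lindel\"of step on the unbounded slit domain; your remark that $\omega(z)\to0$ (hence $v\to\log M$) uniformly as $\Re z\to\infty$, together with the global bound $|p|\le M$, is exactly what is needed to run the maximum principle on exhausting bounded subdomains.
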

In order to apply Lemma \ref{Lem4.6}, we next show that $\varepsilon^2(k, \tau)$ is analytic and has an upper bound for $k \in \mathcal{R}$.
\begin{Le}\label{Lem4.7}
	When $K_0>C_0\|V_j\|^2_{W^{\alpha,p}(\mathbb R^3)}, j=1, 2$, there exists an infinite slab $\mathcal{R}$ defined by \eqref{regionR} such that $\varepsilon^2(k,\tau)$ is analytic for $k \in \mathcal{R}$ and there holds the inequality \[
	|\varepsilon^2(k, \tau)| \lesssim K_0^2|k|^{2m-4\alpha},\quad k \in \mathcal{R}, \quad\tau \in (0,1/2).
	\] 
\end{Le}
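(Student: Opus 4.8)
The plan is to establish the two claims — analyticity of $\varepsilon^2(\cdot,\tau)$ on the slab $\mathcal R$ and the stated polynomial bound — separately, both by reducing to the Born series representation of the far-field pattern and invoking the meromorphic continuation of the resolvent from Lemma \ref{Lem5.2}. First I would recall that $u^\infty_{(j)}(-\theta,\theta,k)$ is obtained from the scattered field $u^{sc}_{(j)} = R_{V_j}(k)(V_j u^{inc})$ via the asymptotic expansion, and that by Lemma \ref{Lem5.2} the operator $\chi R_{V_j}(\lambda)\chi : H^\alpha \to H^\alpha$ is analytic on the region $\mathscr S$ whenever $|\lambda| > C_0 \|V_j\|^2_{W^{\alpha,p}}$; choosing $K_0 > C_0\|V_j\|^2_{W^{\alpha,p}}$ for $j=1,2$ and picking $h_0>0$ small enough that the strip $(K_0,\infty)\times(-h_0,h_0)$ sits inside $\mathscr S$ (this is possible since the lower boundary of $\mathscr S$ is $\Im\lambda \geq -A-\delta\log(1+|\lambda|)$, which stays below any fixed small negative constant only eventually, so one may need $K_0$ also large enough that $-A - \delta\log(1+K_0) < -h_0$ fails in the right way — more simply, take $h_0$ fixed small and $K_0$ large enough that the strip lies in $\mathscr S$), gives that $k\mapsto u^\infty_{(j)}(-\theta,\theta,k)$ extends analytically to $\mathcal R$. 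The analyticity of $U_1(s,\theta,\tau)$ and $U_2(s,\theta,\tau)$ then follows since they are products of such analytic functions (using the reality relation $\overline{u^\infty(\hat x,\theta,k)} = u^\infty(\hat x,\theta,-k)$ only to identify the extension on the positive real axis), and finally $\varepsilon^2(k,\tau) = \frac1k\int_k^{2k} s^{2m}\int_{\mathbb S^2} U_1 U_2\,\mathrm d\sigma(\theta)\,\mathrm ds$ is analytic in $k$ because it is an integral of a jointly analytic integrand over a contour depending analytically (indeed linearly) on $k$; one differentiates under the integral sign, with uniform local bounds on the integrand justifying this, or equivalently applies Morera's theorem.

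For the upper bound, I would estimate $|u^\infty_{(j)}(-\theta,\theta,k)|$ for $k\in\mathcal R$ directly from the Born series \eqref{bs}: writing $u^{sc}_{(j)} = -R_0(k)\sum_{l\ge 0}(-V_j R_0(k))^l (V_j u^{inc})$ and extracting the far-field, each term is controlled by the free resolvent estimate of Lemma \ref{Lem4.1} together with Lemma \ref{Lem5.1}, exactly as in the proof of Lemma \ref{highorder}. The key input is $\|\chi R_0(k)V_j\|_{\mathcal L(H^{-\alpha},H^{-\alpha})} \lesssim |k|^{-1}\|V_j\|_{W^{\alpha,p}} e^{L(\Im k)_-}$, so that for $|k|\ge K_0 \gtrsim \|V_j\|_{W^{\alpha,p}}$ and $|\Im k|\le h_0$ the Neumann series converges with the $e^{Lh_0}$ factor absorbed into a constant, yielding $|u^\infty_{(j)}(-\theta,\theta,k)|\lesssim K_0 |k|^{-2\alpha}$ uniformly in $\theta$ (the $\|e^{ik\theta\cdot y}\chi\|_{H^{-\alpha}}$ and $\|u^{inc}\|_{H^{-\alpha}}$ factors each contribute a $|k|^{-\alpha}$). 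Then $|U_i(s,\theta,\tau)|\lesssim K_0^2 |s|^{-4\alpha}$, and $|\varepsilon^2(k,\tau)| \lesssim \frac1{|k|}\cdot |k| \cdot |k|^{2m}\cdot K_0^2 |k|^{-4\alpha} = K_0^2 |k|^{2m-4\alpha}$, where the length-of-contour factor and the $s^{2m}$ weight on $[k,2k]$ each produce a $|k|$-power — I would be slightly careful that on the complex segment from $k$ to $2k$ the bound $|s|\sim|k|$ still holds, which it does.

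The main obstacle I anticipate is bookkeeping the interplay between the geometry of the analyticity region $\mathscr S$ (whose lower boundary bends logarithmically) and the choice of the flat strip $\mathcal R = (K_0,\infty)\times(-h_0,h_0)$: one must verify that $\mathcal R \subset \mathscr S$, which forces a joint constraint relating $h_0$, $A$, $\delta$, and $K_0$, and one must also ensure the potentials $V_1, V_2$ — which are random and hence not uniformly bounded in $W^{\alpha,p}$ — satisfy $K_0 > C_0\|V_j\|^2_{W^{\alpha,p}}$; this is precisely why the statement is conditioned on that inequality (an event whose probability is controlled elsewhere via Chebyshev). A secondary technical point is justifying differentiation under the integral sign for the contour integral defining $\varepsilon^2$, for which the uniform bound from the previous paragraph, valid on a neighborhood of any point of $\mathcal R$, suffices. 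Neither difficulty is deep, so the proof should be a fairly direct combination of Lemma \ref{Lem4.1}, Lemma \ref{Lem5.1}, Lemma \ref{Lem5.2}, and the Born series.
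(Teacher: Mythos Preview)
Your approach is essentially the paper's: analyticity from Lemma \ref{Lem5.2} and a polynomial far-field bound from resolvent estimates. Two small differences and one bookkeeping slip are worth noting. First, the paper makes the substitution $s=kt$ so that $\varepsilon^2(k,\tau)=k^{2m}\int_1^2 t^{2m}\int_{\mathbb S^2}\widetilde U_1\widetilde U_2\,\mathrm d\sigma\,\mathrm dt$ is an integral over a \emph{fixed} interval, which makes analyticity in $k$ immediate without Morera on a moving contour. Second, the paper bounds $|u^\infty_{(j)}|$ via the full resolvent $R_{V_j}$ rather than the Born series, pairing $e^{-ik\hat x\cdot y}V_j\in H^\alpha$ against $\chi\in H^{-\alpha}$ for the leading term to get the sharper $|u^\infty_{(j)}|\lesssim \sqrt{K_0}\,|k|^{-\alpha}$ (using $\|V_j\|_{W^{\alpha,p}}\lesssim\sqrt{K_0}$ from the hypothesis $K_0>C_0\|V_j\|^2$, not $K_0\gtrsim\|V_j\|$ as you wrote). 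Your Born-series pairing against $e^{ik\theta\cdot y}\chi$ introduces an extra $|k|^{-\alpha}$ factor, and in your last line you bound $|\varepsilon^2|$ by a single $|U_i|$ rather than $|U_1U_2|$; carried through correctly your route gives only $K_0^4|k|^{2m-8\alpha}$. This still suffices for the downstream analytic continuation, but to recover the stated exponent you should use the paper's pairing.
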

\begin{proof}
	Letting $s=kt$ with $t \in (1,2)$, we obtain\begin{align*}
		\varepsilon^2(k, \tau)  =k^{2m}\int_1^{2} t^{2m}\int_{\mathbb S^2}\widetilde U_1(t,\theta,\tau) \widetilde U_2(t,\theta,\tau)\,\mathrm{d}\sigma(\theta)\,\mathrm{d}t,
	\end{align*} where \[
	\widetilde U_1(t)=u_{(1)}^\infty(-\theta, \theta,-kt+\tau ) {u_{(1)}^\infty(-\theta,\theta,  kt)}-u_{(2)}^\infty(-\theta, \theta,-kt+\tau ) {u_{(2)}^\infty(-\theta,\theta,  kt)},
	\]\[\widetilde U_2(t)=u_{(1)}^\infty(-\theta, \theta,kt+\tau )) {u_{(1)}^\infty(-\theta,\theta,  -kt)}-u_{(2)}^\infty(-\theta, \theta,kt+\tau ) {u_{(2)}^\infty(-\theta,\theta,  -kt)}.\] From Lemma \ref{Lem4.6}, when $K_0>C_0\|V\|^2_{W^{\alpha,p}(\mathbb R^3)}$, there exists $h_0$ such that  that $ \pm kt,  \pm (kt+\tau) \in \mathscr{S}$ for $k \in \mathcal{R},$ $t \in (1,2)$ and $\tau \in (0,1/2)$. Hence, from Lemma \ref{Lem5.2} we have that $\varepsilon^2(k,\eta)$ is analytic for $k \in \mathcal R.$ Furthermore, letting $\chi \in C_0^\infty(\mathbb{R}^3)$ be a cut-off function with $\chi|_{D}=1$, 
	we have the estimate\begin{align}
		|u_{(j)}^\infty(\widehat x,\theta,kt)| &= \Big| \frac{1}{4\pi} \int_{\mathbb{R}^3}e^{-ik\hat{x}\cdot y} V_j(y)(e^{ik\theta \cdot y}+R_{V_j}(k)(u^{inc}V_j)(y))\,\mathrm{d}y \Big| \notag\\ &\lesssim \|e^{-iky\cdot (\hat x-\theta)} V_j\|_{H^{\alpha}}\|\chi\|_{H^{-\alpha}}+\|R_{V_j}(k)(u^{inc}V_j)\|_{H^{-\alpha}}\|e^{-ik\hat{x}\cdot y} V_j\|_{H^{\alpha}} \notag\\ & \lesssim \sqrt{K_0}(|k|^{-\alpha}+|k|^{-1-4\alpha}) \lesssim \sqrt{K_0}|k|^{-\alpha}. \label{4.14}\end{align}  
	Similarly, for $k \in \mathcal{R}$, there hold the inequalities \begin{align}\label{4.15}
		|u^\infty(\hat{x},\theta,-kt)| \lesssim  \sqrt{K_0}|k|^{-\alpha}
	\end{align} and \begin{align}\label{4.16}
		|u^\infty(\hat{x},\theta,\pm (kt+\tau))| \lesssim \sqrt{K_0}|k|^{-\alpha}.
	\end{align} It also should be noticed that \eqref{4.16} uniformly holds for $\tau \in (0,1/2)$. Combining \eqref{4.14}--\eqref{4.16} we complete the proof. 
\end{proof}

Now we are in the position to derive the stability estimate.
We begin by deriving a stability for a single realization by
applying an argument of analytic continuation
developed in \cite{ZZ} under the event $\widetilde E:= E \cap \{\omega \in \Omega:K_0>C_0\|V_j\|^2_{W^{\alpha,p}(\mathbb R^3)} ,j=1,2\}$, where $E$ is given in \eqref{event}. Then for the complementary event $\widetilde E^c$ when $\widetilde E$ does not happen, we obtain a probabilistic estimate using \eqref{ps}. The stability follows by combing the two parts.

 Under the condition $K_0>C_0\|V_j\|^2_{W^{\alpha,p}(\mathbb R^3)} (j=1,2)$, applying the quantitative analytic continuation principle in Lemma \ref{Lem4.6} to $\varepsilon^2(k, \tau )$ yields
	\[
	|\varepsilon^2(k, \tau )|\lesssim K_0^2 k^{2m-4\alpha}\varepsilon^{2\mu({k})},\quad {k}>K,
	\]
	where 
	\[
	\mu({k})=\frac{64ah_0}{3\pi^2(a^2+4h_0^2)}e^{\frac{\pi}{2 h_0}(\frac{a}{2}-{k})}, \quad a = K - K_0.
	\]
	Taking $k=A>K$ we have
	\[
	\mu({A})\gtrsim ce^{-\sigma A}
	\]
	for some constant $\sigma>0$ and then
	\[
	|\varepsilon^2(A,\tau)|\lesssim K_0^2 A^{2m-4\alpha} \exp\{-ce^{-\sigma A}|\ln \varepsilon|\}.
	\]
	Using the fact 
	$e^{-x}\leq\frac{6!}{x^{6}}$ for $x>0$, we obtain the following estimate
	\[
	|\varepsilon^2(A,\tau)|\lesssim K_0^2 
    A^{2m-4\alpha} e^{6\sigma A}|\ln \varepsilon|^{-6},
	\]
	which holds uniformly for $\tau\in (0, 1/2)$. 
		
	Next, we consider the following two situations.
    
	\textbf{Case 1:} $K\leq \frac{1}{2\sigma}\ln|\ln \varepsilon|$.
	Taking $A=\frac{1}{2\sigma}\ln|\ln \varepsilon|$, we have, in particular,
	\[
	|\varepsilon^2(A,  \eta )|\lesssim A^{2+2m-4\alpha}|\ln\varepsilon|^{-3}.
	\]
	Since the event $E$ does not happen, by \eqref{5.9} we have
	\[
	\int_{\mathbb S^2}| \hat{h}(  r \theta)|^2\,\mathrm{d}\sigma(\theta)\lesssim A^{2+2m-4\alpha}|\ln\varepsilon|^{-3}+\frac{M_0+K_0^{3+4\alpha}}{A^{\gamma(\epsilon,m)}}\lesssim \frac{M_0+K_0^{3+4\alpha}}{(\ln|\ln\varepsilon|)^{2\beta_1}} \le \frac{M_0+K_0^{3+4\alpha}}{(K\ln|\ln\varepsilon|)^{\beta_1}}
	\]
	for $| r|\le 1$.
	Here we denote $\beta_1=\frac{\gamma(\epsilon,m)}{2}  - \frac{3t}{2}$.
    
	\textbf{Case 2:} $K\geq \frac{1}{2\sigma}\ln|\ln \varepsilon|$. Since \[
    |\varepsilon^2(K,  \eta )|\le \varepsilon^2,
    \] we have
	\[
	\int_{\mathbb S^2}| \widehat{h}( r \theta)|^2\mathrm{d} \theta\le\varepsilon^2+(K_0^{3+4\alpha}+M_0)K^{-2\beta_1} \le \varepsilon^2+\frac{K_0^{3+4\alpha}+M_0}{(K\ln|\ln\varepsilon|)^{\beta_1}},
	\] which holds for $|r| \le 1.$
	Combining Case 1 and Case 2 we obtain the following estimate
	\begin{align}\label{inte}
		\|\hat h\|_{L^2(B_1)}^2
		\lesssim \varepsilon^2+\frac{K_0^{3+4\alpha}+M_0}{K^\beta(\ln|\ln \varepsilon|)^\beta}.
	\end{align}
    At last, applying the analytic continuation as the proof of \cite[Theorem 2.1]{Li2016increasing}, for $h \in \mathcal C_r$, there holds the estimate \begin{equation} \label{oute}
		\|h\|^2_{L^2(D)}=\|\hat{h}\|^2_{L^2(\mathbb{R}^3)} \lesssim \frac{1}{\left|\ln{\|\hat{h}\|_{L^2(B_1)}^2} \right|^{\beta_2}}
	\end{equation}
	with $\beta_2=(2r-3)/4$, which yields the estimate in $\mathbb P(\cdot)$.
	
  Now we consider the complementary event for $\widetilde E$,
	which is denoted by 
	\[
	\widetilde{E}^c:=A^{(1),c}\cup E_1 \cup E_2,\] where
    \begin{align*} E_j:=\{\omega:\exists \,k \ge K, |Y_k^{(j)}| \ge 1 \,\text{holds for some } W_k^{(j)} \in \mathcal A\} &\cup\{\omega: K_0 \le C_1\|V_j\|_{W^{\alpha,p}(\mathbb R^3)} \} \\ &\cup\{\omega: K_0 \le C_0 \|V_j\|^2_{W^{\alpha,p}(\mathbb R^3)}\}
	\end{align*} for $j=1,2.$
	Here we let $\tilde k=K$.
	Recalling \eqref{pes}, the probability of the event $\widetilde{E}^c$ can be estimated by \begin{align}
	    \mathbb P(\widetilde{E}^c) &\lesssim \frac{2}{K^{a(\epsilon)}}+\frac{2}{M_0}+\mathbb P(K_0 \le C_1\|V_1\|_{W^{\alpha,p}(\mathbb R^3))})+\mathbb P(K_0  \le C_1\|V_2\|_{W^{\alpha,p}(\mathbb R^3)}) \\ &\quad+\mathbb P(K_0 \le C_0\|V_1\|^2_{W^{\alpha,p}(\mathbb R^3))})+\mathbb P(K_0  \le C_0\|V_2\|^2_{W^{\alpha,p}(\mathbb R^3)})\notag\\ & \lesssim\frac{1}{K^{a(\epsilon)}}+\frac{1}{M_0}+\frac{\mathbb E\|V_1\|_{W^{\alpha,p}(\mathbb R^3)}+\mathbb E\|V_2\|_{W^{\alpha,p}(\mathbb R^3)}}{K_0} +\frac{\mathbb E\|V_1\|_{W^{\alpha,p}(\mathbb R^3)}+\mathbb E\|V_2\|_{W^{\alpha,p}(\mathbb R^3)}}{\sqrt K_0}\notag\\ &\lesssim\frac{1}{K^{a(\epsilon)}}+ \frac{1}{M_0}+\frac{1}{\sqrt K_0}. \label{probe}
	\end{align} 
Combining \eqref{oute}--\eqref{probe} we complete the proof.

\section{Conclusion}

In this paper, we investigate an inverse potential scattering problem for the stochastic Schr\"odinger equation. The potential is assumed to be 
a generalized Gaussian random field whose covariance operator is a pseudo-differential operator. A  probabilistic stability is established 
for determining the principle of the covariance operator by multi-frequency far-field pattern. The analysis combines the ergodicity theory in stochastic analysis
and analytic continuation principle in complex theory. 
A possible continuation of this work is to study the stability in two dimensions. A more challenging problem is to investigate the inverse random scattering problems in inhomogeneous media. The method developed in this paper is not applicable to this case since
the Born series may not converge for large frequencies. We hope to be able to report the progresses on these problems in the future.

\end{document}